\documentclass{amsart}
\usepackage[utf8x]{inputenc}
\usepackage[T1]{fontenc}
\usepackage{amsthm}
\usepackage{amsmath}
\usepackage{amssymb}
\usepackage{graphicx}
\usepackage{mathrsfs} 
\usepackage{mathtools}
\usepackage{hyperref}
\usepackage{enumitem}
\numberwithin{equation}{section}
\newtheorem{Def}{Definition}
\numberwithin{Def}{section}
\newtheorem{Lem}[Def]{Lemma}
\newtheorem{Cor}[Def]{Corollary}
\newtheorem{Thm}[Def]{Theorem}

\newtheorem{Pro}[Def]{Proposition}
\title[A uniform lower bound for the Zhang--Kawazumi invariant]{A uniform lower bound for the Zhang--Kawazumi invariant and applications to the Bogomolov conjecture}
\author{Robert Wilms}
\address{Universit\'e de Caen Normandie, CNRS, LMNO UMR 6139, F-14000 Caen, France}
\email{\href{mailto:robert.wilms87@gmail.com}{robert.wilms87@gmail.com}}
\subjclass[2020]{Primary 14G40; Secondary 11G50, 14H55}
\date{\today}
\thanks{The author gratefully acknowledges funding from the \emph{Région Normandie} under the program \emph{``Normandie Recherche -- Objectif Labels d'excellence 2025''} (Project \textbf{MERCI})}
\begin{document}
	
	\begin{abstract}
		We prove that the Zhang--Kawazumi invariant \(\varphi(X)\) of a compact and connected Riemann surface \(X\) of genus \(g\ge 2\) is strictly larger than 
		\[\frac{g(g+2)-(2g+1)H_g}{g-1},\]
		where \(H_g=\sum_{k=1}^g \frac{1}{k}\) denotes the \(g\)-th harmonic number. If \(X\) is hyperelliptic, we give the stronger bound \(\varphi(X)>\frac{g}{2}(H_g-1)\). The proof relies on a new expression of \(\varphi(X)\) in terms of a quadratic form on the space of smooth Hermitian matrix-valued functions on \(X\), evaluated at certain projector matrices. As an arithmetic application, we deduce new lower bounds for the self-intersection number \(\omega_a^2\) of the admissible adelic metrized canonical bundle \(\omega_a\) of a smooth projective curve of genus \(g\ge 2\) over a number field and hence new uniform height bounds in the Bogomolov conjecture.
	\end{abstract}
	\maketitle
	\section{Introduction}
	The Zhang--Kawazumi invariant \(\varphi(X)\) of a compact and connected Riemann surface \(X\) of genus \(g\ge 2\) was introduced independently by Zhang in \cite[Theorem 1.3.1]{Zha10} and by Kawazumi in \cite[Section 1]{Kaw08} as a certain integral of the Arakelov--Green function. Zhang showed how \(\varphi(X)\) is related to the self-intersection number \(\omega_a^2\) of the admissible adelic metrized canonical bundle \(\omega_a\) of a smooth projective curve of genus \(g\ge 1\) over a number field. Earlier, Zhang \cite[Theorem 5.6]{Zha93} showed that \(\omega_a^2\) is related to the essential minimum of the Néron--Tate height on the Abel--Jacobi embedding of the curve in its Jacobian.
	
	The goal of this note is to prove a new explicit and uniform lower bound for \(\varphi(X)\) depending only on \(g\). For a precise definition of \(\varphi(X)\) we refer to Section \ref{sec_arakelov-invariants}. For every \(n\in\mathbb{Z}_{\ge 1}\), we write \(H_n=\sum_{k=1}^n\frac{1}{k}\) for the \(n\)-th harmonic number. Our main result is the following theorem.
	\begin{Thm}\label{mainthm}
		Let \(X\) be a compact and connected Riemann surface of genus \(g\ge 2\). The Zhang--Kawazumi invariant \(\varphi(X)\) satisfies
		\[\varphi(X)>\frac{g(g+2)-(2g+1)H_g}{g-1}\ge \begin{cases} \frac{1}{2}& \text{if }g=2,\\ g-2\log g& \text{if } g\ge 3.\end{cases}\]
		If \(X\) is hyperelliptic, we have the stronger bound \(\varphi(X)>\frac{g}{2}(H_g-1)\ge \frac{g}{2}\left(\log g-\frac{1}{2}\right)\).
	\end{Thm}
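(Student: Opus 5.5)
The plan is to work from a spectral description of \(\varphi(X)\) and then control the spectral sum by elementary optimization. Recall Kawazumi's formula: if \(\mu_{Ar}\) denotes the Arakelov measure, \(0<\lambda_1\le\lambda_2\le\dots\) the nonzero eigenvalues of the Laplacian with respect to \(\mu_{Ar}\), and \(\phi_\ell\) the corresponding orthonormal real eigenfunctions, then
\[\varphi(X)=\sum_{\ell\ge1}\frac{2}{\lambda_\ell}\sum_{j,k=1}^g\Bigl|\int_X\phi_\ell\,\omega_j\wedge\overline{\omega_k}\Bigr|^2,\]
where \((\omega_j)\) is an orthonormal basis of holomorphic \(1\)-forms. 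Attach to this basis the Hermitian matrix-valued function \(P(x)=\bigl(\omega_j\overline{\omega_k}/\sum_i|\omega_i|^2\bigr)(x)\). At each point where the canonical map is defined, \(P(x)\) is the rank-one orthogonal projector onto the line spanned by the evaluation vector. The formula then reads \(\varphi(X)=Q(P)\), where \(Q(A)=\sum_\ell\frac{2}{\lambda_\ell}\|\langle A,\phi_\ell\rangle_{\mu}\|^2\) is a quadratic form on Hermitian-matrix-valued functions, taken against the measure \(g\mu_{Ar}\) modulo the contribution of the constants (\(\int P\,g\mu_{Ar}=I\)).

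To bound \(Q(P)\) from below without any knowledge of the eigenvalues, I would use duality. For every smooth Hermitian \(B\) one has
\[Q(A)\ge 2\langle A,B\rangle-\tfrac12\int\|dB\|^2,\]
because \(Q\) is the inverse of the Dirichlet form restricted to mean-zero functions. The natural choice is \(B=c\log\) of a matrix built from \(P\), or equivalently \(B\) equal to a function of the Gram or Bergman kernel. The dual pairing against \(P\) then produces traces of \(P\log P\)-type quantities, and the Dirichlet term can be bounded using the curvature identity \(\Delta\log\sum|\omega_i|^2\propto g\mu_{Ar}-\) (a Fubini–Study pullback). I expect the harmonic numbers to enter through averages over the unitary group, namely \(\mathbb{E}\log|u_1|^2=-H_{g-1}\) for a uniform unit vector in \(\mathbb{C}^g\). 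This suggests averaging the test function over \(U(g)\)-conjugates, i.e. over changes of orthonormal basis. After the averaging, the bound reduces to optimizing a scalar parameter \(c\), which should yield exactly \(\frac{g(g+2)-(2g+1)H_g}{g-1}\).

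Strictness will come from the fact that equality in the Cauchy–Schwarz step would force \(P\) to lie in the range of the Dirichlet operator applied to the chosen \(B\). That is incompatible with \(P\) having rank one everywhere and degenerating at Weierstrass points.

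In the hyperelliptic case, the canonical map factors through a double cover of \(\mathbb{P}^1\) onto a rational normal curve. The entries of \(P\) are therefore functions of \(x(p)\) only, with \(\omega_j=x^{j-1}dx/y\). Working with the explicit basis, I would choose the test matrix diagonal and adapted to this monomial structure. The computation of the pairing then reduces to an integral over \(\mathbb{P}^1\), giving \(\frac{g}{2}(H_g-1)\).

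The numerical inequalities in the statement are elementary. For \(g=2\), the bound equals \((8-5\cdot\frac32)/1=\frac12\). For \(g\ge3\), I would use \(H_g\le\log g+\gamma+\frac1{2g}\) and \(H_g\ge\log g+\frac12\).

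The main obstacle is the choice of the test function \(B\) and the control of its Dirichlet energy near the zeros and base points of the forms, where \(\log\) of the matrix degenerates. I would first try \(B=\log\bigl(P+\epsilon I\bigr)\), let \(\epsilon\to0\), and check that the singular contributions are integrable and of the right sign.
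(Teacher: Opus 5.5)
Your variational framework is correct and is the same as the paper's. Kawazumi's spectral formula amounts to \(\varphi(X)=2\mathcal{E}(V,V)\) with \(\Delta V=gP-I_g\), and your duality inequality is \(\mathcal{E}(V-B,V-B)\ge 0\) written out. However, all the content of the proof lies in the choice of \(B\), and there the proposal has a genuine gap.

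Your concrete candidate collapses. Since \(P\) is a rank-one projector, \(\log(P+\epsilon I_g)=\log\frac{1+\epsilon}{\epsilon}\,P+(\log\epsilon)I_g\). Constant multiples of \(I_g\) contribute nothing: they have zero energy, and they pair to zero with \(gP-I_g\) because its trace vanishes. So you are really optimizing over \(B=aP\) with \(a\in\mathbb{R}\). With \(\mathcal{E}(P,P)=-2\deg E_1=4g-4\) and \(\mathcal{E}(P,V)=g-1\), this gives only \(\varphi(X)>\frac{g-1}{2}\). That is the \(k=1\) term of the paper's sum. It matches the claim at \(g=2\) but is strictly weaker for every \(g\ge 3\) (e.g.\ \(1<\frac{13}{12}\) at \(g=3\)), and it is also weaker than the hyperelliptic bound.

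The \(U(g)\)-averaging cannot repair this. By Schur's lemma, \(\int_{U(g)}uBu^*\,du=\frac{\operatorname{tr}B}{g}I_g\), which pairs to zero with \(gP-I_g\). Averaging the inequalities for \(u^*Bu\) instead gives \(-\mathcal{E}(B,B)\le 0\).

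The identity \(\mathbb{E}\log|u_1|^2=-H_{g-1}\) only produces an additive constant, which is invisible to both terms. The unaveraged functions \(\log|\langle u,\omega\rangle|^2\) have logarithmic singularities with infinite Dirichlet energy, so they are not admissible test functions.

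Finally, \(P\) does not degenerate at Weierstrass points. The canonical system is base-point-free, so \(P\) is a smooth rank-one projector everywhere, and your strictness argument has no footing as stated.

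The missing idea is to use the higher-order geometry of the canonical curve, which no constant-coefficient function of \(P\) can see. This is the osculating flag \(E_1\subsetneq\dots\subsetneq E_g=\mathcal{O}_X^{\oplus g}\), obtained by saturating the span of \(f,f',\dots,f^{(k-1)}\), together with its orthogonal projectors \(P_k\). These satisfy three identities:
\begin{enumerate}
\item \(\mathcal{E}(P_k,P_l)=0\) for \(k\neq l\), from \(\partial P_k=P_l(\partial P_k)P_l\) for \(k<l\) and \((\overline{\partial}P_l)P_l=0\);
\item \(\mathcal{E}(P_k,P_k)=-2\deg E_k\), by Chern--Weil on \(\det E_k\);
\item \(\mathcal{E}(P_k,V)=g-k\), since \(\operatorname{tr}(P_k(gP_1-I_g))=g-k\) pointwise.
\end{enumerate}
The test matrix \(B=\sum_{k=1}^{g-1}\frac{g-k}{-2\deg E_k}P_k\) then gives \(\varphi(X)>\sum_{k=1}^{g-1}\frac{(g-k)^2}{-\deg E_k}\).

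The nonzero Wronskian sections \(\mathcal{W}_k\) of \(\det E_k\otimes(\Omega_X^1)^{\otimes k(k+1)/2}\) give \(-\deg E_k\le k(k+1)(g-1)\). The harmonic numbers then come from the telescoping identity \(\frac{(g-k)^2}{k(k+1)}=1+\frac{g^2}{k}-\frac{(g+1)^2}{k+1}\), not from unitary averages.

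In the hyperelliptic case no new test matrix is needed. Transition functions in the monomial basis give \(\det E_k\simeq\pi^*\mathcal{O}_{\mathbb{P}^1}(-k(g-k))\), so \(\deg E_k=-2k(g-k)\), and the sum becomes \(\sum_k\frac{g-k}{2k}=\frac g2(H_g-1)\).

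Strictness comes from two facts: \(\mathcal{E}(A,A)=0\Leftrightarrow A\) is constant, and \(V+\sum_k a_kP_k\) is never constant. To see the latter, take a zero of the Wronskian with \(\mathcal{W}_l\neq 0=\mathcal{W}_{l+1}\); there the \emph{higher} osculating spaces degenerate. At that point \(\partial P_l=0\), which contradicts \(\operatorname{tr}(P_l\Delta V)=g-l\neq 0\).

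Your elementary numerical estimates at the end are fine.
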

	Because of their relation to the essential minimum of the Néron--Tate height on a curve, lower bounds for the Zhang--Kawazumi invariant are of considerable interest in Diophantine geometry. Both Zhang \cite[Proposition 2.5.3, Remark 1]{Zha10} and Kawazumi \cite[Corollary 1.2]{Kaw08} showed that \(\varphi(X)>0\) if \(g\ge 2\). In their recent proof of a uniform and quantitative version of the Mordell conjecture, Yu, Yuan, and Zhou \cite[Theorem 9.19]{YYZ26} proved the uniform lower bound \(\varphi(X)\ge \frac{1}{15625g^{1/3}}\) for \(g\ge 2\) with slightly better bounds for \(g\in\{2,3,4\}\), using hyperbolic geometry. In contrast, our proof of Theorem \ref{mainthm} uses only the canonical Arakelov metric. Moreover, our bound grows linearly with the genus. The special value \(\varphi(X_B)=0.519860385\dots\), computed by Pioline \cite[Equation (78)]{Pio16} for the Burnside curve \(X_B\) defined by \(y^2=x(x^4-1)\), shows that the uniform lower bound \(\varphi(X)>\frac{1}{2}\) is already close to the value of \(\varphi\) at this distinguished genus-two curve. This also represents a substantial numerical improvement over the previous uniform bound \(\varphi(X)\geq1/6400\) of Yu--Yuan--Zhou in genus \(2\).
	
	In the proof, we use the positive semi-definite symmetric bilinear form
	\[\mathcal{E}\colon C^{\infty}(X,\operatorname{Herm}_g)\times C^{\infty}(X,\operatorname{Herm}_g)\to \mathbb{R},\qquad (A,B)\mapsto \frac{i}{\pi}\int_X\operatorname{tr}\left(\partial A\wedge\overline{\partial}B\right),\]
	where \(\operatorname{Herm}_g\) denotes the real vector space of Hermitian \(g\times g\) matrices.
	We will consider the osculating flag
	\[0=E_0\subsetneq E_1\subsetneq\dots\subsetneq E_g=\mathcal{O}_X^{\oplus g}\]
	of subbundles of \(\mathcal{O}_X^{\oplus g}\) and the associated orthogonal projectors \(P_k\in C^{\infty}(X,\operatorname{Herm}_g)\) onto \(E_k\) for all \(k\in\{1,\dots,g\}\). We show that the Zhang--Kawazumi invariant can be expressed by
	\[\varphi(X)=2\mathcal{E}(V,V),\]
	where \(V=\Delta^{-1}(gP_1-I_g)\) and \(\Delta\) denotes the Laplacian operator. The lower bound in Theorem \ref{mainthm} will follow from the non-negativity
	\[\mathcal{E}\left(V+\sum_{k=1}^{g-1}\frac{g-k}{2\deg E_k}P_k,V+\sum_{k=1}^{g-1}\frac{g-k}{2\deg E_k}P_k\right)\ge 0\]
	and an explicit computation of the left-hand side. Moreover, one can show that this bound is strict using that \(\mathcal{E}(A,A)=0\) implies that \(A\) is constant.
	
	Next, we discuss applications of Theorem \ref{mainthm} to Arakelov theory and to the Bogomolov conjecture. Let \(C\) be a smooth projective geometrically connected curve of genus \(g\ge 2\) defined over a number field \(K\). Replacing \(K\) by a finite extension, we may assume that \(C\) has semistable reduction over \(B=\operatorname{Spec}\mathcal{O}_K\), where \(\mathcal{O}_K\) denotes the ring of integers of \(K\). We write \(\omega\) for the canonical bundle on \(C\). Zhang introduced in \cite{Zha93} a canonical admissible adelic metric on \(\omega\) and defined the self-pairing \(\omega_a^2=(\omega_a,\omega_a)\) of the corresponding adelic metrized line bundle \(\omega_a\). In \cite[Conjecture 1.4.1]{Zha10}, he conjectured that
	\begin{align}\label{equ_conjecture-zhang}
	\omega_a^2\ge \frac{2g-2}{2g+1}\left(\sum_{v\in|B|}\varphi(\Gamma_v(C))\log N(v)+\sum_{\sigma\colon K\to\mathbb{C}}\varphi(C_\sigma)\right),
	\end{align}
	where \(|B|\) denotes the closed points of \(B\), \(\Gamma_v(C)\) is the polarized metrized reduction graph of \(C\) at the place \(v\), \(\varphi(\Gamma_v(C))\) is a certain invariant of polarized metrized graphs defined in \cite[Theorem 1.3.1]{Zha10}, \(N(v)\) is the cardinality of the residue field at \(v\), the second sum runs over all embeddings from \(K\) into the complex numbers \(\mathbb{C}\), and \(C_\sigma=(C\otimes_{K,\sigma}\mathbb{C})^{\operatorname{an}}\) denotes the compact Riemann surface induced by the base change along \(\sigma\colon K\to\mathbb{C}\). Zhang showed that this inequality is implied by the more general arithmetic Standard conjecture of Gillet--Soulé \cite[Conjecture~2]{GS94}. Moreover, he proved \cite[Corollary 1.3.3]{Zha10} that equality holds in \eqref{equ_conjecture-zhang} if \(C\) is hyperelliptic. Cinkir \cite[Theorem 2.11]{Cin11} established a lower bound for the terms \(\varphi(\Gamma_v(C))\), which in particular implies \(\varphi(\Gamma_v(C))\ge 0\). In \cite[Theorem 1.2]{Wil22}, we proved that
	\[\omega_a^2\ge  \frac{g-1}{2g+1}\left(\sum_{v\in|B|}\varphi(\Gamma_v(C))\log N(v)+\sum_{\sigma\colon K\to\mathbb{C}}\varphi(C_\sigma)\right).\]
	Thus, we can summarize the above results in the following inequality:
	\begin{align}\label{equ_lowerbound-omega-varphi}
		\omega_a^2\ge\begin{cases}\frac{2g-2}{2g+1}\sum_{\sigma\colon K\to\mathbb{C}}\varphi(C_\sigma) & \text{if } C\text{ is hyperelliptic,}\\
			\frac{g-1}{2g+1}\sum_{\sigma\colon K\to\mathbb{C}}\varphi(C_\sigma) &\text{in general.}\end{cases}		
	\end{align}
	The semistability assumption can now be removed. Indeed, after passing to a finite extension \(L/K\) over which \(C\) has semistable reduction, both sides of the inequality in \eqref{equ_lowerbound-omega-varphi} scale by the same factor \([L:K]\). Dividing by this factor gives the same inequality over \(K\).
	Applying Theorem \ref{mainthm} to the inequality in \eqref{equ_lowerbound-omega-varphi}, we obtain the following corollary.
	\begin{Cor}\label{cor_lowerbound-omega}
		Let \(C\) be a smooth projective geometrically connected curve of genus \(g\ge 2\) defined over a number field \(K\) and \(\omega_a\) its canonical bundle equipped with its admissible adelic metric. We have
		\[\frac{\omega_a^2}{[K\colon \mathbb{Q}]}>\begin{cases}\frac{g(g-1)}{2g+1}(H_g-1)\ge\frac{g}{2}\log g-\frac{17}{50}g &\text{if } C \text{ is hyperelliptic,}\\ \frac{g(g+2)}{2g+1}-H_g\ge \frac{1}{2}g-\frac{13}{10}\log g&\text{in general.}\end{cases}\]
	\end{Cor}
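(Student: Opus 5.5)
Combining the inequality \eqref{equ_lowerbound-omega-varphi} with Theorem \ref{mainthm} gives the corollary directly. The elementary estimates on harmonic numbers are then verified separately.

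First, Theorem \ref{mainthm} applies to each Riemann surface $C_\sigma$, since each is compact, connected and of genus $g\ge 2$. There are exactly $[K:\mathbb{Q}]$ embeddings $\sigma\colon K\to\mathbb{C}$. If $C$ is hyperelliptic, then every $C_\sigma$ is hyperelliptic, because the hyperelliptic involution is defined over $\overline{K}$ and base change preserves it.

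In the hyperelliptic case, \eqref{equ_lowerbound-omega-varphi} and the strict bound $\varphi(C_\sigma)>\frac g2(H_g-1)$ give
\[\omega_a^2>[K:\mathbb{Q}]\cdot\frac{2g-2}{2g+1}\cdot\frac g2(H_g-1)=[K:\mathbb{Q}]\frac{g(g-1)}{2g+1}(H_g-1).\]
In general, we use $\varphi(C_\sigma)>\frac{g(g+2)-(2g+1)H_g}{g-1}$ instead. This gives
\[\omega_a^2>[K:\mathbb{Q}]\cdot\frac{g-1}{2g+1}\cdot\frac{g(g+2)-(2g+1)H_g}{g-1}=[K:\mathbb{Q}]\left(\frac{g(g+2)}{2g+1}-H_g\right).\]
The inequalities stay strict because there is at least one embedding and the factors $\frac{2g-2}{2g+1}$ and $\frac{g-1}{2g+1}$ are positive. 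Dividing by $[K:\mathbb{Q}]$ gives the first inequalities.

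The remaining step, which is the only real work, is the two numerical estimates. For the general case, note that $\frac{g(g+2)}{2g+1}=\frac g2+\frac34-\frac{3}{4(2g+1)}$. We use $H_g\le \log g+\gamma+\frac1{2g}$, with $\gamma$ the Euler–Mascheroni constant. It then suffices to show
\[\tfrac34-\gamma-\tfrac{3}{4(2g+1)}-\tfrac1{2g}+\tfrac{3}{10}\log g\ge 0.\]
The left side is increasing in $g$, so we only need to check it at small $g$. At $g=2$ the left side is slightly negative, so for $g=2$ (and if needed $g=3$) we verify the inequality directly: $\frac{8}{5}-\frac32=0.1\ge 1-\frac{13}{10}\log 2\approx0.099$.

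For the hyperelliptic case, we write $\frac{g(g-1)}{2g+1}=\frac g2-\frac34+\frac{3}{4(2g+1)}$ and use $H_g-1\ge \log g+\gamma-1$. Here both factors are positive. Expanding, it suffices to show
\[\frac g2(\gamma-1)-\frac34(\log g+\gamma-1)+\frac{3}{4(2g+1)}(H_g-1)\ge -\frac{17}{50}g.\]
Since $\gamma-1\approx-0.4228$, the leading term $\frac g2(\gamma-1)\approx -0.2114\,g$ gives margin about $0.1286\,g$. This margin dominates $\frac34\log g$ for large $g$. For small $g$, say $g\le 10$, we check the inequality directly from exact values of $H_g$; at $g=2$ the left side of the corollary is $\frac{2}{5}\cdot\frac12=0.2$ and $\log 2-0.68\approx0.013$. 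The main obstacle is getting the crossover point right, that is, pinning down for which $g$ the asymptotic argument takes over and confirming the finitely many small cases numerically.
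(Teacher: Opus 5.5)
Your proposal is correct and follows the paper's route: apply the bounds of Theorem~\ref{mainthm} to each $C_\sigma$ in \eqref{equ_lowerbound-omega-varphi}, then check the harmonic-number estimates, which the paper simply asserts can be verified directly. One small numerical correction: in the hyperelliptic case, your reduced inequality (after replacing $H_g$ by $\log g+\gamma$) fails at $g=11$, by about $8\cdot 10^{-4}$, and only holds from $g=12$ on, so the direct check must run through $g=11$ rather than stop at $g\le 10$; the original inequality does hold there comfortably, $9.660\ge 9.448$.
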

	Next, we discuss applications to the Bogomolov conjecture. Let \(D\in \operatorname{Div}^1(C)\) be a divisor of degree \(1\) on \(C\) and write
	\[j_D\colon C\to J,\qquad x\mapsto [x-D]\]
	for the corresponding Abel--Jacobi embedding of the curve \(C\) in its Jacobian \(J=\operatorname{Pic}^0(C)\). The Bogomolov conjecture states that there is an \(\epsilon>0\) such that
	\[\#\{x\in C(\overline{K})~|~ h_{\mathrm{NT}}(j_D(x))<\epsilon\}<\infty,\]
	where \(h_{\mathrm{NT}}\) denotes the Néron--Tate height on \(J(\overline{K})\).
	We use the normalization \(h_{\mathrm{NT}}=2[K:\mathbb{Q}]\widehat h_\Theta\) as in \cite{Zha93}, where \(\widehat h_\Theta\) is the absolute Néron--Tate height associated with a symmetric theta divisor.
	Ullmo \cite{Ull98} proved this for curves, and Zhang \cite{Zha98} proved the more general statement for subvarieties of abelian varieties. Earlier, Zhang \cite[Theorem 5.6]{Zha93} already proved that
	\[\#\{x\in C(\overline{K})~|~ h_{\mathrm{NT}}(j_D(x))\le c\}<\infty\]
	for every real number \(c<\frac{\omega_a^2}{4(g-1)}\). For the definition and normalization of the Néron--Tate height \(h_{\mathrm{NT}}\) we refer to \cite{Zha93}. Thus, every explicit positive strict lower bound of \(\omega_a^2\) gives an explicit value for \(\epsilon\) in the Bogomolov conjecture. If we apply our lower bounds for \(\omega_a^2\), we obtain the following corollary.
	\begin{Cor}
		Let \(C\) be a smooth projective geometrically connected curve of genus \(g\ge 2\) defined over a number field \(K\) of degree \(d_K=[K:\mathbb{Q}]\). For every divisor \(D\in\operatorname{Div}^1(C)\) of degree \(1\) on \(C\), we have
		\[\#\left\{x\in C(\overline{K})~\left|~ h_{\mathrm{NT}}(j_D(x))\le \tfrac{d_K}{4(g-1)}\left(\tfrac{g(g+2)}{2g+1}-H_g\right) \right.\right\}<\infty.\]
		If \(C\) is hyperelliptic, then
		\[\#\left\{x\in C(\overline{K})~\left|~ h_{\mathrm{NT}}(j_D(x))\le\tfrac{d_K g}{8g+4}(H_g-1)\right.\right\}<\infty.\]
	\end{Cor}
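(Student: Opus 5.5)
This corollary follows by combining Zhang's finiteness result \cite[Theorem 5.6]{Zha93} with Corollary \ref{cor_lowerbound-omega}. The task is to turn the strict lower bounds on \(\omega_a^2\) into explicit thresholds \(c<\frac{\omega_a^2}{4(g-1)}\).

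First, recall Zhang's statement. For every real number \(c<\frac{\omega_a^2}{4(g-1)}\), the set \(\{x\in C(\overline{K}) \mid h_{\mathrm{NT}}(j_D(x))\le c\}\) is finite, with \(h_{\mathrm{NT}}\) normalized as in \cite{Zha93}. This normalization is the one fixed in the introduction, so no rescaling is needed.

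In the general case, set
\[c=\tfrac{d_K}{4(g-1)}\left(\tfrac{g(g+2)}{2g+1}-H_g\right).\]
Corollary \ref{cor_lowerbound-omega} gives \(\omega_a^2/d_K>\frac{g(g+2)}{2g+1}-H_g\). Multiplying by \(\frac{d_K}{4(g-1)}>0\), which is legitimate since \(g\ge 2\), yields \(c<\frac{\omega_a^2}{4(g-1)}\). Zhang's theorem then gives finiteness.

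In the hyperelliptic case, set \(c=\frac{d_K g}{8g+4}(H_g-1)\). Corollary \ref{cor_lowerbound-omega} gives \(\omega_a^2/d_K>\frac{g(g-1)}{2g+1}(H_g-1)\), and dividing by \(4(g-1)\) gives
\[\frac{\omega_a^2}{4(g-1)}>\frac{d_K\, g(g-1)(H_g-1)}{4(g-1)(2g+1)}=\frac{d_K g}{8g+4}(H_g-1)=c.\]
Zhang's theorem again gives finiteness.

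Strictness in Corollary \ref{cor_lowerbound-omega} is essential here. Zhang's theorem requires \(c<\frac{\omega_a^2}{4(g-1)}\) strictly, so the thresholds may be taken exactly equal to the stated bounds, with non-strict inequality \(\le\) in the sets.

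The only point that needs checking is whether Zhang's theorem is stated for \(C\) with semistable reduction over \(K\). If it is, pass to a finite extension \(L/K\) over which \(C\) is semistable. There \(\omega_a^2\) scales by \([L:K]\), and \(h_{\mathrm{NT}}\) with this normalization also scales by \([L:K]\), since it is \(2[L:\mathbb{Q}]\widehat h_\Theta\). The two conditions therefore match, and the set of \(\overline{K}\)-points is the same. Beyond this the argument is direct arithmetic.
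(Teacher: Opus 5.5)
Your proposal is correct and follows the paper's own route. You combine Zhang's finiteness theorem, which applies for every \(c<\omega_a^2/(4(g-1))\), with the strict lower bounds of Corollary~\ref{cor_lowerbound-omega}, your arithmetic checks out in both the general and the hyperelliptic case, and your base-change remark is a harmless extra consistency check matching the paper's own scaling argument for removing semistability.
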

	Thus, the Bogomolov conjecture even holds for an \(\epsilon>0\) independent of the genus \(g\), that is,
	\[\#\left\{x\in C(\overline{K})~\left|~ h_{\mathrm{NT}}(j_D(x))\le \tfrac{13}{336}d_K \right.\right\}<\infty.\]
	
	Recently, Yu, Yuan, and Zhou \cite[Theorem 1.11]{YYZ26} have proved a quantitative version of the Bogomolov conjecture. They showed that for every \(0\le \rho<\frac{1}{4g}\), we have
	\[\#\{P\in C(\overline{K})~|~h_{\mathrm{NT}}(j_D(P))\le \rho\omega_a^2\}<\frac{3.2\cdot 10^{11}g^{\frac{17}{3}}}{1-4g\rho}\left(1+10^{-6}\log\left(\frac{1}{1-4g\rho}\right)\right).\]
	Note that \(h_{\mathrm{NT}}(j_D(P))=2|P-(D-\alpha_0)|^2\) and \(\rho=2r^2\) in the notation of Yu--Yuan--Zhou. Applying our lower bound on \(\omega_a^2\), we obtain the following corollary.
	\begin{Cor}
		Let \(C\) be a smooth projective geometrically connected curve of genus \(g\ge 2\) defined over a number field \(K\) of degree \(d_K=[K:\mathbb{Q}]\). 
		For every \(\rho\in \left[0,\frac{1}{4g}\right)\), we set
		\[c(g,\rho)=\frac{3.2\cdot 10^{11}g^{\frac{17}{3}}}{1-4g\rho}\left(1+10^{-6}\log\left(\frac{1}{1-4g\rho}\right)\right).\]
		For every divisor \(D\in\operatorname{Div}^1(C)\) of degree \(1\) on \(C\), we have
		\[\#\left\{x\in C(\overline{K})~\left|~ h_{\mathrm{NT}}(j_D(x))\le d_K\rho\left(\tfrac{g(g+2)}{2g+1}-H_g\right)\right.\right\}<c(g,\rho).\]
		If \(C\) is hyperelliptic, then
		\[\#\left\{x\in C(\overline{K})~\left|~ h_{\mathrm{NT}}(j_D(x))\le \tfrac{d_K \rho g(g-1)}{2g+1}(H_g-1) \right.\right\}<c(g,\rho).\]
	\end{Cor}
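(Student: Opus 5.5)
This corollary follows directly from \cite[Theorem 1.11]{YYZ26} once a lower bound for \(\omega_a^2\) is available. The plan is to substitute the bound of Corollary \ref{cor_lowerbound-omega} into the threshold \(\rho\,\omega_a^2\), so that the explicit threshold in our statement becomes smaller than \(\rho\,\omega_a^2\). The set we want to bound is then contained in the set counted by Yu--Yuan--Zhou.

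First, handle normalizations. As noted before the statement, \(h_{\mathrm{NT}}(j_D(P))=2|P-(D-\alpha_0)|^2\) and \(\rho=2r^2\) in their notation. Their theorem therefore reads exactly as quoted:
\[\#\{P\in C(\overline{K})\mid h_{\mathrm{NT}}(j_D(P))\le \rho\,\omega_a^2\}<c(g,\rho)\qquad\text{for every }\rho\in[0,\tfrac{1}{4g}).\]
I would check that both \(h_{\mathrm{NT}}\) and \(\omega_a^2\) are taken over the same field \(K\), that is, not normalized by \([K:\mathbb{Q}]\) in one place and not the other. Both scale linearly under finite base extension, so any discrepancy is a harmless common factor. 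I would also check that their result allows an arbitrary degree-one divisor \(D\) and needs no semistability hypothesis; otherwise pass to a finite extension \(L/K\) and observe that both sides scale by \([L:K]\).

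Second, use Corollary \ref{cor_lowerbound-omega}. In general it gives
\[\omega_a^2>d_K\left(\tfrac{g(g+2)}{2g+1}-H_g\right),\]
and in the hyperelliptic case
\[\omega_a^2>d_K\,\tfrac{g(g-1)}{2g+1}(H_g-1).\]
Multiplying by \(\rho\ge 0\) gives \(d_K\rho\left(\tfrac{g(g+2)}{2g+1}-H_g\right)\le \rho\,\omega_a^2\), and similarly in the hyperelliptic case. When \(\rho=0\) the two thresholds coincide, so the inclusion still holds. Hence
\[\left\{x\mid h_{\mathrm{NT}}(j_D(x))\le d_K\rho\left(\tfrac{g(g+2)}{2g+1}-H_g\right)\right\}\subseteq\left\{x\mid h_{\mathrm{NT}}(j_D(x))\le\rho\,\omega_a^2\right\},\]
and the cardinality bound \(c(g,\rho)\) transfers to the smaller set.

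The only real obstacle is bookkeeping: confirming that the constants and normalizations in \cite{YYZ26} match those used here, namely \(h_{\mathrm{NT}}=2[K:\mathbb{Q}]\widehat h_\Theta\) and \(\omega_a^2\) the non-normalized Zhang self-pairing over \(K\). I would verify this first by comparing with the case \(\rho\to\frac{1}{4g}\) and Zhang's threshold \(\frac{\omega_a^2}{4(g-1)}\) from \cite[Theorem 5.6]{Zha93}.
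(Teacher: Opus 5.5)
Your proposal is correct and is the paper's argument. You take the Yu--Yuan--Zhou count for the threshold \(\rho\,\omega_a^2\), multiply the lower bound of Corollary \ref{cor_lowerbound-omega} by \(\rho\ge 0\), and conclude by the resulting set inclusion. The normalization remarks are harmless extra bookkeeping that the paper handles in the discussion just before the statement.
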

	
	Next, we discuss a Northcott property satisfied by \(\omega_a^2\).
	Yuan \cite[Theorem 1.4]{Yua26} proved that there is a constant \(c_g>0\) depending only on the genus \(g\ge 2\) such that
	\begin{align}\label{equ_bound-omega-faltings}
	\omega_a^2\ge d_Kc_g\max\{h_{\mathrm{Fal}}(C),1\},
	\end{align}
	where \(h_{\mathrm{Fal}}(C)\) denotes the stable Faltings height of the curve \(C\). We refer to \cite{Yua26} for the definition and normalization of \(h_{\mathrm{Fal}}(C)\). Faltings’ \cite{Fal83} comparison of the stable Faltings height with a projective moduli height implies the Northcott property for principally polarized abelian varieties. Together with the injectivity of the Torelli map on geometric points, this implies that for every \(d,g\in\mathbb{Z}_{\geq 1}\) and \(c\in\mathbb{R}\), there are only finitely many \(\overline{\mathbb{Q}}\)-isomorphism classes of smooth projective geometrically connected curves \(C\) of genus \(g\) that are defined over a number field \(K\) with \([K:\mathbb{Q}]\leq d\) and satisfy \(h_{\mathrm{Fal}}(C)\leq c\).
	Combining our lower bound on \(\omega_a^2\) with the inequality in \eqref{equ_bound-omega-faltings}, we obtain the following completely uniform Northcott property for \(\omega_a^2\).
	\begin{Cor}
		Let \(c\in\mathbb{R}\). There are only finitely many \(\overline{\mathbb{Q}}\)-isomorphism classes of smooth projective geometrically connected curves defined over any number field, having any genus at least two, and satisfying \(\omega_a^2\le c\).
	\end{Cor}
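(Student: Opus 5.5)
The plan is to use the lower bound of Corollary \ref{cor_lowerbound-omega} to bound both the genus and the degree of the field of definition, and then to combine \eqref{equ_bound-omega-faltings} with the Northcott property for the stable Faltings height. Throughout, \(\omega_a^2\) is the unnormalized self-intersection for \(C\) over its field of definition \(K\), so it carries the factor \(d_K=[K:\mathbb{Q}]\). This factor is exactly what lets the field degree be controlled.

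\emph{Step 1: a positive lower bound growing in \(g\).} Set \(f(g)=\frac{g(g+2)}{2g+1}-H_g\). By Corollary \ref{cor_lowerbound-omega}, every curve \(C\) of genus \(g\ge 2\) over \(K\) satisfies \(\omega_a^2>d_K f(g)\); this holds whether or not \(C\) is hyperelliptic, because the hyperelliptic bound is stronger. I then need two facts about \(f\).

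First, \(m:=\inf_{g\ge 2}f(g)>0\). Direct evaluation gives \(f(2)=\frac{8}{5}-\frac{3}{2}=\frac{1}{10}\), \(f(3)=\frac{15}{7}-\frac{11}{6}>0\), \(f(4)=\frac{24}{9}-\frac{25}{12}>0\), and so on for small \(g\). For larger \(g\) the estimate \(f(g)\ge \frac{g}{2}-\frac{13}{10}\log g\) is positive (already at \(g=8\): \(4-2.7>0\)), and it is increasing.

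Second, \(f(g)\to\infty\) as \(g\to\infty\), again from \(f(g)\ge\frac{g}{2}-\frac{13}{10}\log g\).

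\emph{Step 2: bounding genus and degree.} Suppose \(\omega_a^2\le c\). Then \(d_K f(g)<c\).

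Since \(d_K\ge 1\), we get \(f(g)<c\). As \(f(g)\to\infty\), this bounds the genus: \(g\le g_0(c)\).

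Since \(f(g)\ge m\), we also get \(d_K<c/m\). So the curve is defined over a number field of degree at most \(d_0=\lfloor c/m\rfloor\).

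If \(c\le 0\), the set is already empty, since \(\omega_a^2>0\).

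\emph{Step 3: Northcott.} For each of the finitely many genera \(2\le g\le g_0\), inequality \eqref{equ_bound-omega-faltings} gives
\[
h_{\mathrm{Fal}}(C)\le \max\{h_{\mathrm{Fal}}(C),1\}\le \frac{\omega_a^2}{d_Kc_g}\le \frac{c}{c_g}.
\]
This bound depends only on \(g\) and \(c\). The finiteness statement recalled before the corollary (Faltings' comparison together with Torelli) then yields only finitely many \(\overline{\mathbb{Q}}\)-isomorphism classes for each pair \((g,d)\) with \(g\le g_0\) and \(d\le d_0\). A finite union over these pairs completes the argument.

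The only step needing real care is Step 1, namely positivity of \(f(g)\) for every \(g\ge2\), including the small genera where the asymptotic bound \(\frac{g}{2}-\frac{13}{10}\log g\) is negative. I would handle those genera by direct numerical evaluation of \(f\).
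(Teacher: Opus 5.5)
Your proof is correct and follows the paper's argument: bound the genus via Corollary \ref{cor_lowerbound-omega}, then conclude with \eqref{equ_bound-omega-faltings} and the Faltings--Torelli finiteness. The only difference is that you bound \(d_K\) through the uniform positivity \(\inf_{g\ge 2}\bigl(\tfrac{g(g+2)}{2g+1}-H_g\bigr)>0\), whereas the paper bounds \(d_K\) through \eqref{equ_bound-omega-faltings} with \(\eta=\min_{2\le j\le G}c_j\). Your remark that \(\frac{g}{2}-\frac{13}{10}\log g\) is negative for small genera is inaccurate: it is positive for every real \(g\ge 2\), with minimum about \(0.058\) at \(g=2.6\), so your separate check of small genera is harmless but unnecessary.
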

	Note that the invariant \(\omega_a^2\) depends on the choice of a field of definition.
	\begin{proof}
		Since \(\omega_a^2\ge 0\), we may assume \(c\ge 0\).
		We set 
		\[G=\lfloor 3c+5\rfloor\qquad \text{and} \qquad \eta=\min_{2\le j\le G}c_j>0.\]
		Let \(C\) be a smooth projective geometrically connected curve defined over a number field \(K\) of degree \(d_K=[K:\mathbb{Q}]\), having genus \(g\ge 2\), and satisfying \(\omega_a^2\le c\).
		By Corollary \ref{cor_lowerbound-omega}, we have
		\[\tfrac{g-5}{3}\le \tfrac{1}{2}g-\tfrac{13}{10}\log g<\frac{\omega_a^2}{d_K}\le\omega_a^2\le c.\]
		Thus, the genus is bounded by \(2\le g\le G\). By the inequality in \eqref{equ_bound-omega-faltings}, we have
		\[d_K\le \frac{c}{\eta},\qquad h_{\mathrm{Fal}}(C)\le \frac{c}{\eta}.\]
		Thus, there are only finitely many \(\overline{\mathbb{Q}}\)-isomorphism classes as in the corollary for each fixed genus \(2\le g\le G\). Hence, there are only finitely many \(\overline{\mathbb{Q}}\)-isomorphism classes as in the corollary.
	\end{proof}
	
	\section{Analytic Arakelov invariants}\label{sec_arakelov-invariants}
	We recall the definitions of analytic invariants in Arakelov theory such as the Arakelov--Green function and the Zhang--Kawazumi invariant. Let \(X\) be a compact and connected Riemann surface of genus \(g\ge 2\). We choose a basis of one-forms \(\omega_1,\dots,\omega_g\in H^0(X,\Omega_X^1)\) which is orthonormal with respect to the inner product
	\[\langle\omega,\omega'\rangle=\frac{i}{2}\int_X\omega\wedge\overline{\omega'}.\]
	The canonical \((1,1)\)-form \(\mu\) on \(X\) is defined by
	\[\mu=\frac{i}{2g}\sum_{j=1}^g \omega_j\wedge \overline{\omega_j}.\]
	It has volume \(\int_X\mu=1\). The Arakelov--Green function \(G\colon X^2\to \mathbb{R}_{\ge 0}\) is the unique function satisfying the following properties:
	\begin{enumerate}[label=(\roman*)]
		\item The function \(G^2\) is \(C^{\infty}\) on \(X^2\).
		\item We have \(\partial_x\overline{\partial}_x\log G(x,y)^2=2\pi i(\mu(x)-\delta_y(x))\) in the sense of currents.
		\item The function is normalized by \(\int_X\log G(x,y)\mu(x)=0\).
	\end{enumerate}
	In the following we will write 
	\[g(x,y)=\log G(x,y).\]
	One can check that \(g\) is symmetric, that is, \(g(x,y)=g(y,x)\).
	
	The Laplacian operator \(\Delta\) on \(C^{\infty}\) is defined by setting
	\[\frac{1}{\pi i}\partial\overline{\partial}f=(\Delta f)\mu\]
	for every \(f\in C^{\infty}(X)\). We can also apply \(\Delta\) to a complex-valued smooth function \(f\), by applying it separately to its real part and its imaginary part:
	\[\Delta(f)=\Delta(\operatorname{Re}f+i\operatorname{Im}f)=\Delta\operatorname{Re}f+i\Delta \operatorname{Im}f.\]
	In particular, this implies that \(\Delta \overline{f}=\overline{\Delta f}\).
	
	By the above properties (i)--(iii), the Arakelov--Green function \(g(x,y)\) can be considered as an inverse of \(\Delta\) in the sense that
	\begin{align}\label{equ_faltings-invers}
	f(y)=\int_X(-g(x,y))\Delta f(x)\mu(x)
	\end{align}
	for every \(f\in C^{\infty}(X)\) satisfying \(\int_X f(x)\mu(x)=0\). For a proof we refer to \cite[Section 3]{Fal84}.
	Although \(\Delta\) is not invertible on the whole space \(C^{\infty}\) as it sends all constant functions to \(0\), we nevertheless write
	\begin{align*}
	\Delta^{-1}f(y)=-\int_Xg(x,y)f(x)\mu(x)
	\end{align*}
	for every \(f\in C^{\infty}(X)\). 
	By Equation \eqref{equ_faltings-invers}, we know that \(\Delta^{-1}\Delta f=f\) if \(\int_X f\mu=0\). The normalization \(\int_X f\mu=0\) also implies the other direction. Indeed, we have
	\begin{align*}
	\Delta(\Delta^{-1}f(y))\mu(y)&=-\frac{1}{\pi i}\partial_y\overline{\partial}_y\int_{x\in X} g(x,y)f(x)\mu(x)\\
	&=-\int_{x\in X} \frac{1}{\pi i}\partial_y\overline{\partial}_yg(x,y) f(x)\mu(x)\\
	&=-\int_{x\in X} (\mu(y)-\delta_x(y)) f(x)\mu(x)\\
	&=-\left(\int_{x\in X}f(x)\mu(x)\right)\mu(y)+\int_{x\in X}\delta_x(y)f(x)\mu(x)\\
	&=f(y)\mu(y)
	\end{align*}
	and hence \(\Delta\Delta^{-1}f=f\) if \(\int_X f\mu=0\).
	We can also apply \(\Delta^{-1}\) to complex-valued smooth functions, in which case we apply it separately to the real part and to the imaginary part. In particular, we get \(\overline{\Delta^{-1}f}=\Delta^{-1} \overline{f}\) for all complex-valued smooth functions.
	
	The Arakelov--Green function defines a Hermitian metric on the line bundle \(\mathcal{O}_{X^2}(\Delta)\) associated to the diagonal divisor \(\Delta\subseteq X^2\) by setting \(\|1_\Delta\|(x,y)=G(x,y)\), where \(1_\Delta\) denotes the canonical section of \(\mathcal{O}_{X^2}(\Delta)\). The curvature form \(h_\Delta\) of \(\mathcal{O}_{X^2}(\Delta)\) equipped with this metric is given by
	\[h_{\Delta}(x,y)=\mu(x)+\mu(y)-\frac{i}{2}\sum_{j=1}^g(\omega_j(x)\wedge \overline{\omega}_j(y)+\omega_j(y)\wedge \overline{\omega}_j(x)),\]
	see \cite[Proposition 3.1]{Ara74}. The Zhang--Kawazumi invariant \(\varphi(X)\) of \(X\) is given by
	\[\varphi(X)=\int_{X^2} g(x,y)h_{\Delta}^2(x,y),\]
	see the proof of \cite[Proposition 2.5.3]{Zha10} or \cite[Proposition 5.1]{dJo16}.
	\section{The bilinear form \texorpdfstring{\(\mathcal E\)}{E}}
	We continue the notation from the previous section. We write \(\operatorname{Herm}_g\) for the real vector space of Hermitian \(g\times g\) matrices. In this section we study the form 
	\[\mathcal{E}\colon C^{\infty}(X,\operatorname{Herm}_g)\times C^{\infty}(X,\operatorname{Herm}_g)\to\mathbb{C},\qquad (A,B)\mapsto \frac{i}{\pi}\int_X\operatorname{tr}(\partial A\wedge \overline{\partial}B).\]
	The following lemma shows that \(\mathcal{E}\) is a symmetric positive semi-definite bilinear form which is indeed real-valued.
	\begin{Lem}\label{lem_bilinear-form}
		The map \(\mathcal{E}\) defines a real-valued symmetric positive semi-definite bilinear form on \(C^{\infty}(X,\operatorname{Herm}_g)\). We have \(\mathcal{E}(A,A)=0\) if and only if \(A\) is constant on \(X\). Moreover, \(\mathcal{E}\) satisfies the identity
		\begin{align}\label{equ_E-identity}
		\mathcal{E}(A,B)=\int_X\operatorname{tr}(A\Delta B)\mu,
		\end{align}
		where the Laplacian \(\Delta\) is applied entrywise to the matrix \(B\).
	\end{Lem}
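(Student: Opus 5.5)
We will prove the identity \eqref{equ_E-identity} first, since everything else follows from it. We write \(A=(a_{jk})\), \(B=(b_{jk})\) with \(a_{kj}=\overline{a_{jk}}\) and \(b_{kj}=\overline{b_{jk}}\). Then
\[\operatorname{tr}(\partial A\wedge\overline\partial B)=\sum_{j,k}\partial a_{jk}\wedge\overline\partial b_{kj}.\]
We will use the identity \(d(a\,\overline\partial b)=\partial a\wedge\overline\partial b+a\,\partial\overline\partial b\), which holds for smooth complex-valued functions \(a,b\) because \(\overline\partial a\wedge\overline\partial b=0\) and \(\overline\partial\overline\partial b=0\) on a curve. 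Stokes' theorem on the closed surface \(X\) then gives
\[\int_X\partial a\wedge\overline\partial b=-\int_X a\,\partial\overline\partial b=-\pi i\int_X a\,(\Delta b)\mu.\]
Here we use the definition \(\frac{1}{\pi i}\partial\overline\partial b=(\Delta b)\mu\), extended \(\mathbb{C}\)-linearly as in Section \ref{sec_arakelov-invariants}. Multiplying by \(\frac{i}{\pi}\) and summing over \(j,k\) yields
\[\mathcal E(A,B)=\sum_{j,k}\int_X a_{jk}\,\Delta b_{kj}\,\mu=\int_X\operatorname{tr}(A\Delta B)\mu.\]

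Next we show that \(\mathcal{E}\) is real-valued and symmetric. Bilinearity is clear. For symmetry, we first apply the same Stokes argument with the roles of \(\partial\) and \(\overline\partial\) swapped; this gives \(\int_X\partial a\wedge\overline\partial b=\int_X\partial b\wedge\overline\partial a\) for scalar functions, since both sides equal \(-\int_X a\,\partial\overline\partial b=-\int_X b\,\partial\overline\partial a\) after integrating by parts. The second equality, \(\int_X a\,\partial\overline\partial b=\int_X b\,\partial\overline\partial a\), follows from applying Stokes twice. Using \(\operatorname{tr}(XY)=\operatorname{tr}(YX)\) on the matrix entries, this yields \(\mathcal E(A,B)=\mathcal E(B,A)\).

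For reality we compute the complex conjugate. Conjugation sends \(\partial a_{jk}\) to \(\overline\partial\,\overline{a_{jk}}=\overline\partial a_{kj}\), and sends \(i\) to \(-i\). Hence
\[\overline{\mathcal E(A,B)}=-\frac{i}{\pi}\int_X\sum_{j,k}\overline\partial a_{kj}\wedge\partial b_{jk}=\frac{i}{\pi}\int_X\sum_{j,k}\partial b_{jk}\wedge\overline\partial a_{kj}=\mathcal E(B,A)=\mathcal E(A,B).\]

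For positivity we set \(B=A\) and write each term \(\partial a_{jk}\wedge\overline\partial a_{kj}=\partial a_{jk}\wedge\overline{\partial a_{jk}}\). In a local coordinate \(z=x+iy\), put \(\partial a=f\,dz\). Then \(\frac{i}{\pi}f\,dz\wedge\overline f\,d\overline z=\frac{2}{\pi}|f|^2\,dx\wedge dy\), which is non-negative. Therefore
\[\mathcal E(A,A)=\frac{2}{\pi}\sum_{j,k}\int_X|\partial a_{jk}/\partial z|^2\,dx\,dy\ge0.\]

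If \(\mathcal{E}(A,A)=0\), every \(\partial a_{jk}\) vanishes identically, so each \(\overline{a_{jk}}=a_{kj}\) is holomorphic. On the compact connected Riemann surface \(X\) this forces every entry to be constant. The converse, that constant \(A\) gives \(\mathcal{E}(A,A)=0\), is immediate.

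The main point requiring care is the sign bookkeeping in Stokes' theorem and in the conjugation step, in particular the ordering \(\partial\wedge\overline\partial\) versus \(\overline\partial\wedge\partial\) and the relation \(\overline{a_{jk}}=a_{kj}\) coming from Hermitian symmetry. We will check the sign convention against the scalar case \(\mathcal E(f,f)=\int_X f\Delta f\,\mu\ge0\) for real \(f\). This matches the convention that \(\Delta\) is a positive operator, as implicit in the Green function relation \(\partial\overline\partial\log G^2=2\pi i(\mu-\delta)\).
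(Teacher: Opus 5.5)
Your proposal is correct and follows essentially the same route as the paper: Stokes' theorem for the identity $\mathcal{E}(A,B)=\int_X\operatorname{tr}(A\Delta B)\mu$ and for symmetry, the local computation $\frac{i}{\pi}|f|^2\,dz\wedge d\overline{z}=\frac{2}{\pi}|f|^2\,dx\wedge dy$ for positivity, and Hermitian symmetry to turn $\partial a_{jk}=0$ into constancy. The differences are only cosmetic. You work entrywise with scalar functions, where the paper uses matrix-valued forms and graded cyclicity of the trace. You also deduce reality by conjugating the defining integral and then invoking symmetry, whereas the paper conjugates the $\Delta$-identity and uses $\overline{\Delta B}=\Delta\overline{B}$ together with transposes.
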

	\begin{proof}
		We first prove Equation \eqref{equ_E-identity}.
		By Stokes' theorem, we get
		\begin{align*}
			0&=\frac{i}{\pi}\int_X d\operatorname{tr}(A\overline{\partial}B)=\frac{i}{\pi}\int_X\operatorname{tr}(dA\wedge \overline{\partial}B)+\frac{i}{\pi}\int_X\operatorname{tr}(Ad\overline{\partial}B)\\
			&=\frac{i}{\pi}\int_X\operatorname{tr}(\partial A\wedge \overline{\partial}B)+\frac{i}{\pi}\int_X\operatorname{tr}(A\partial\overline{\partial}B)=\mathcal{E}(A,B)-\int_X\operatorname{tr}(A\Delta B)\mu,
		\end{align*}
		where we used that \(d=\partial+\overline{\partial}\) and that \((2,0)\)- and \((0,2)\)-forms vanish on the \(1\)-dimensional variety \(X\). This proves Equation \eqref{equ_E-identity}.
		
		Using that \(A,B\in C^{\infty}(X,\operatorname{Herm}_g)\) are Hermitian matrices, that \(\overline{\Delta B}=\Delta \overline{B}\), and the cyclicity of the trace, we get by Equation \eqref{equ_E-identity} that
		\begin{align*}
		\overline{\mathcal{E}(A,B)}&=\int_X\overline{\operatorname{tr}(A\Delta B)}\mu=\int_X \operatorname{tr}(\overline{A}\Delta \overline{B})\mu=\int_X \operatorname{tr}(A^t \Delta B^t)\mu\\
		&=\int_X \operatorname{tr}((\Delta B A)^t)\mu=\int_X \operatorname{tr}(\Delta B A)\mu=\int_X \operatorname{tr}(A\Delta B)\mu=\mathcal{E}(A,B).
		\end{align*}
		Thus, \(\mathcal{E}\) is real-valued.
		As in the proof of Equation \eqref{equ_E-identity}, we can use Stokes' theorem to obtain
		\begin{align*}
			0&=\int_X d\operatorname{tr}(AdB)=\int_X \operatorname{tr}(dA\wedge dB)=\int_X\operatorname{tr}(\partial A\wedge \overline{\partial}B)+\int_X\operatorname{tr}(\overline{\partial} A\wedge \partial B)\\
			&=\int_X\operatorname{tr}(\partial A\wedge \overline{\partial}B)-\int_X\operatorname{tr}(\partial B\wedge \overline{\partial}A)=\frac{\pi}{i}\left(\mathcal{E}(A,B)-\mathcal{E}(B,A)\right).
		\end{align*}
		 Thus, \(\mathcal{E}\) is symmetric.
		 The bilinearity of \(\mathcal{E}\) follows directly from the definition. 
		 
		 It remains to show that \(\mathcal{E}(A,A)\) is non-negative and that it is zero if and only if \(A\) is constant. We fix an arbitrary \(A\in C^{\infty}(X,\operatorname{Herm}_g)\) and we write \(A=(a_{jk})_{1\le j,k\le g}\) for the coefficients of \(A\). Note that \(\overline{a}_{jk}=a_{kj}\) for all \(j,k\in\{1,\dots,g\}\) since \(A\) is Hermitian. We fix an arbitrary local coordinate function \(z\colon U\to \mathbb{C}\) on any open subset \(U\subseteq X\). On \(U\) we compute
		 \begin{align*}
		 	\frac{i}{\pi}\operatorname{tr}(\partial A\wedge \overline{\partial}A)&=\frac{i}{\pi}\operatorname{tr}\left(\left(\sum_{k=1}^g\frac{\partial a_{jk}}{\partial z}\cdot\frac{\partial a_{kl}}{\partial \overline{z}}\cdot dz\wedge d\overline{z}\right)_{1\le j,l\le g}\right)\\
		 	&=\sum_{j,k=1}^g\frac{i}{\pi}\frac{\partial a_{jk}}{\partial z}\cdot\frac{\partial a_{kj}}{\partial \overline{z}}\cdot dz\wedge d\overline{z}\\
		 	&=\sum_{j,k=1}^g\frac{i}{\pi} \frac{\partial a_{jk}}{\partial z}\cdot\frac{\partial \overline{a}_{jk}}{\partial \overline{z}}\cdot dz\wedge d\overline{z}=\sum_{j,k=1}^g \frac{i}{\pi}\left|\frac{\partial a_{jk}}{\partial z}\right|^2 dz\wedge d\overline{z}\\
			&=\sum_{j,k=1}^g \frac{2}{\pi} \left|\frac{\partial a_{jk}}{\partial z}\right|^2 dx\wedge dy,
		 \end{align*}
		 where we used the real coordinates \(z=x+iy\) in the last step. Thus, \(\mathcal{E}(A,A)\) is the integral of a non-negative form and hence it is non-negative. We get \(\mathcal{E}(A,A)=0\) if and only if \(\frac{\partial a_{jk}}{\partial z}=0\) for all \(j,k\in\{1,\dots,g\}\) and all coordinate functions. Since \(A\) is Hermitian, this implies
		 \[\frac{\partial a_{jk}}{\partial \overline{z}}=\overline{\left(\frac{\partial \overline{a}_{jk}}{\partial z}\right)}=\overline{\left(\frac{\partial a_{kj}}{\partial z}\right)}=0.\]
		 Thus, we have \(\mathcal{E}(A,A)=0\) if and only if \(A\) is constant. This completes the proof of the lemma.
	\end{proof}
	
	Next, we connect the bilinear form \(\mathcal{E}\) to the Zhang--Kawazumi invariant \(\varphi(X)\). We fix an arbitrary local coordinate function \(z\colon U\to\mathbb{C}\) on any open subset \(U\subseteq X\). Let \(f_1,\dots,f_g\) be the holomorphic functions on \(U\) such that \(\omega_j=f_jdz\) for all \(j\in\{1,\dots,g\}\). We write \(f=(f_1,\dots,f_g)^t\) and we write \(\|f\|^2=\overline{f}^tf\). Note that \(\|f\|^2\) is a nowhere vanishing function since the line bundle \(\Omega_X^1\) is base-point-free.
	We define the matrix
	\[P=\frac{f\overline{f}^t}{\|f\|^2}.\]
	This is independent of the choice of the coordinate function \(z\). Indeed, any change of the coordinate function \(z\) changes each \(f_j\) by the same nowhere vanishing holomorphic function \(a\) on \(U\), which cancels out in the definition of \(P\). In particular, we can glue these canonical definitions of \(P\) together to a global matrix \(P\in C^{\infty}(X,\operatorname{Herm}_g)\). The following lemma gives an alternative expression for the Zhang--Kawazumi invariant in terms of the bilinear form \(\mathcal{E}\).
	\begin{Lem}\label{lem_zkE}
		Set \(F=gP-I_g\).
		\begin{enumerate}[label=(\alph*)]
			\item The matrix \(V=\Delta^{-1}F\) is Hermitian.
			\item We have \(\int_X F\mu=0\) and hence \(F=\Delta\Delta^{-1}F=\Delta V\).
			\item We have \(\varphi(X)=2\mathcal{E}(V,V).\)
		\end{enumerate}
	\end{Lem}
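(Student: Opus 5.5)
Parts (a) and (b) are routine, so the plan puts most of the effort into (c).

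\emph{Part (a).} By definition, \(V_{jk}(y)=-\int_X g(x,y)F_{jk}(x)\mu(x)\). Since \(g\) is real-valued and \(\Delta^{-1}\) commutes with complex conjugation, we get \(\overline{V_{jk}}=\Delta^{-1}\overline{F_{jk}}=\Delta^{-1}F_{kj}=V_{kj}\), because \(F\) is Hermitian. Hence \(V\) is Hermitian.

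\emph{Part (b).} In a local coordinate, \(\|f\|^2 dz\wedge d\overline z=\sum_j\omega_j\wedge\overline{\omega_j}\). Hence \(\mu=\frac{i}{2g}\|f\|^2dz\wedge d\overline z\), and
\[P\mu=\frac{i}{2g}f\overline f^t\,dz\wedge d\overline z=\frac{i}{2g}\bigl(\omega_j\wedge\overline{\omega_k}\bigr)_{j,k}.\]
Integrating and using orthonormality, \(\frac{i}{2}\int_X\omega_j\wedge\overline{\omega_k}=\delta_{jk}\), gives \(\int_X P\mu=\frac1g I_g\). Therefore \(\int_XF\mu=I_g-I_g=0\), and the identity \(\Delta\Delta^{-1}F=F\) for functions of mean zero, applied entrywise, gives \(F=\Delta V\).

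\emph{Part (c).} The plan is to expand \(h_\Delta^2\) and write \(\varphi(X)=\int_{X^2}g\,h_\Delta^2\) in terms of \(P\).

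Put \(\eta(x,y)=\frac{i}{2}\sum_j\bigl(\omega_j(x)\wedge\overline{\omega_j}(y)+\omega_j(y)\wedge\overline{\omega_j}(x)\bigr)\), so that \(h_\Delta=\mu(x)+\mu(y)-\eta\). Squaring on \(X^2\) kills \(\mu(x)^2\) and \(\mu(y)^2\) for type reasons. It also kills the cross terms \(\mu(x)\eta\) and \(\mu(y)\eta\), since each term of \(\eta\) has bidegree \((1,0)\) or \((0,1)\) in each variable. What remains is \(h_\Delta^2=2\mu(x)\mu(y)+\eta^2\).

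In \(\eta^2\) only the mixed products survive, namely
\[2\cdot\tfrac{i^2}{4}\sum_{j,k}\omega_j(x)\wedge\overline{\omega_j}(y)\wedge\omega_k(y)\wedge\overline{\omega_k}(x).\]
Reordering this gives a form proportional to
\[\sum_{j,k}\bigl(\omega_j\wedge\overline{\omega_k}\bigr)(x)\,\bigl(\omega_k\wedge\overline{\omega_j}\bigr)(y).\]
Using \(P\mu=\frac{i}{2g}(\omega_j\wedge\overline{\omega_k})\), this becomes \(-c\,g^2\operatorname{tr}(P(x)P(y))\mu(x)\mu(y)\) for an explicit constant \(c\). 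I expect \(c=2\); the sign has to be tracked carefully through \(i^2\) and the transpositions of one-forms. Checking the trace over the \(x\)-integral against the known identity \(\int_y h_\Delta^2=\)const\(\cdot\mu(x)\) will calibrate \(c\).

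Because \(\int_X g(x,y)\mu(x)=0\), the term \(2\mu(x)\mu(y)\) contributes nothing to \(\varphi(X)\). The same holds for any term of \(\operatorname{tr}(P(x)P(y))\) that is constant in one variable. So I will replace \(gP\) by \(F+I_g\): the part \(\operatorname{tr}(F(x)F(y))\) survives, while the part with \(I_g\) is constant in one variable and drops out. This should give
\[\varphi(X)=-c\int_{X^2}g(x,y)\operatorname{tr}(F(x)F(y))\mu(x)\mu(y)=c\int_X\operatorname{tr}(F\,\Delta^{-1}F)\mu.\]
Finally, \(\int_X\operatorname{tr}(F\Delta^{-1}F)\mu=\int_X\operatorname{tr}(\Delta V\cdot V)\mu=\mathcal E(V,V)\), by Lemma \ref{lem_bilinear-form}, cyclicity of the trace, and part (b). With \(c=2\) this yields \(\varphi(X)=2\mathcal E(V,V)\).

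The main obstacle is the sign and constant bookkeeping in \(\eta^2\), in particular the ordering of the four one-forms and the factor \(2\) from the two mixed products. I will verify it against the positivity \(\mathcal E(V,V)\ge0\) and the known fact \(\varphi>0\), and against the normalization \(\int_{X^2}h_\Delta^2=\deg(\Delta^2)=2-2g\).
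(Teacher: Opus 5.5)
Your proposal is correct and follows the paper's proof essentially step by step: the identity \(gP_{jk}\mu=\frac{i}{2}\omega_j\wedge\overline{\omega}_k\) for (b), the reduction \(h_\Delta^2=2\mu(x)\mu(y)+\eta^2\), and the substitution \(gP=F+I_g\) followed by \(F=\Delta V\) and Lemma~\ref{lem_bilinear-form}. The constant you left open is indeed \(c=2\): one gets \(\eta^2=\frac{1}{2}\sum_{j,k}(\omega_j\wedge\overline{\omega}_k)(x)\wedge(\omega_k\wedge\overline{\omega}_j)(y)=-2g^2\operatorname{tr}(P(x)P(y))\mu(x)\mu(y)\), exactly as in the paper, and your calibration also confirms it, since \(\int_{X^2}h_\Delta^2=2-cg\) must equal the self-intersection \(2-2g\) of the diagonal.
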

	\begin{proof}
		The matrix \(P\) is by construction Hermitian. Thus, the matrix \(F=gP-I_g\) is Hermitian, too. Since \(\Delta^{-1}\overline{f}=\overline{\Delta^{-1}f}\) for any complex-valued smooth function \(f\) on \(X\), we conclude that also \(V=\Delta^{-1}F\) is Hermitian. This shows (a).
		
		To prove (b), we first claim that
		\begin{align}\label{equ_P-omega}
			gP_{jk}\mu=\frac{i}{2}\omega_j\wedge \overline{\omega}_k.
		\end{align}
		To prove this, we fix again an arbitrary coordinate function \(z\colon U\to \mathbb{C}\) on an open subset \(U\subseteq X\). Using the notation from above, we get on \(U\) that
		\[g P_{jk} \mu=g\frac{f_j \overline{f}_k}{\|f\|^2}\frac{i}{2g}\sum_{l=1}^g f_l\overline{f}_ldz\wedge d\overline{z}=\frac{i}{2}f_j\overline{f}_kdz\wedge d\overline{z}=\frac{i}{2}\omega_j\wedge\overline{\omega}_k.\]
		Gluing together, we obtain Equation \eqref{equ_P-omega} globally.
		By Equation \eqref{equ_P-omega} we check that
		\begin{align}\label{equ_int-F}
			\int_X F_{jk}\mu=\frac{i}{2}\int_X\omega_j\wedge\overline{\omega}_k-\delta_{jk}=0,
		\end{align}
		where the second equality follows since the basis \(\omega_1,\dots,\omega_g\) is orthonormal. Thus, \(\int_X F\mu=0\). The equality \(F=\Delta \Delta^{-1}F\) follows from the discussion above the lemma. This proves (b).
		
		Finally, we prove (c). Using Equation \eqref{equ_P-omega}, we can rewrite the form \(h_{\Delta}^2\) as 
		\begin{align}\label{equ_computation-h2}
			h_{\Delta}^2(x,y)&=2\mu(x)\mu(y)+\frac{1}{2}\sum_{j,k=1}^g\omega_j(x)\wedge\overline{\omega}_k(x)\wedge \omega_k(y)\wedge \overline{\omega}_j(y)\\
			&=2\mu(x)\mu(y)-2g^2\sum_{j,k=1}^g P_{jk}(x)P_{kj}(y)\mu(x)\mu(y)\nonumber\\
			&=2(1-g^2\operatorname{tr}(P(x)P(y)))\mu(x)\mu(y).\nonumber
		\end{align}
		By the definition of \(P\), we have \(\operatorname{tr}(P)=1\). It follows that 
		\[\operatorname{tr}(F(x)F(y))=g^2\operatorname{tr}(P(x)P(y))-g.\]
		Applying this to the computation in \eqref{equ_computation-h2}, we obtain
		\begin{align*}
			h_{\Delta}^2(x,y)=2(1-g-\operatorname{tr}(F(x)F(y)))\mu(x)\mu(y).
		\end{align*}
		Since \(\int_{X^2}g(x,y)\mu(x)\mu(y)=0\), we get
		\begin{align}\label{equ_computation-varphi}
			\varphi(X)&=\int_{X^2}g(x,y)h_{\Delta}^2(x,y)=-2\int_{X^2}g(x,y)\operatorname{tr}(F(x)F(y))\mu(x)\mu(y)\\
			&=2\int_X \operatorname{tr}\left(\left(-\int_X g(x,y)F(x)\mu(x)\right)F(y)\right)\mu(y)\nonumber\\
			&=2\int_X \operatorname{tr}\left(\Delta^{-1} F(y) F(y)\right)\mu(y).\nonumber
		\end{align}
		By part (b), the matrix \(F\) satisfies \(F=\Delta\Delta^{-1} F=\Delta V\). Applying this to the computation in \eqref{equ_computation-varphi}, we finally obtain
		\[\varphi(X)=2\int_X\operatorname{tr}(V\Delta V)\mu=2\mathcal{E}(V,V),\]
		where the last equality follows from Lemma \ref{lem_bilinear-form}. This completes the proof of the lemma.
	\end{proof}

	\section{The osculating flag}\label{sec_osculating-flag}
	We recall the construction of the osculating flag. We continue the notation from the previous sections.
	
	We again choose an arbitrary local coordinate function \(z\colon U\to\mathbb{C}\) on an open subset \(U\subseteq X\) and holomorphic functions \(f_j\) on \(U\) such that \(\omega_j=f_jdz\) for all \(j\in\{1,\dots,g\}\). We write \(f_j^{(k)}=\frac{d^kf_j}{dz^k}\) for \(k\in\mathbb{Z}_{\ge 0}\) and \(j\in\{1,\dots,g\}\). We define the vectors \(f^{(k)}=\left(f_1^{(k)},\dots,f_g^{(k)}\right)^t\) for every \(k\in\{0,\dots,g-1\}\). Further, we denote the Wronskian by
	\[W=\det\left(f_j^{(k-1)}\right)_{1\le j,k\le g}.\]
	Since \(\omega_1,\dots,\omega_g\) form a basis of \(H^0(X,\Omega_X^1)\), the functions \(f_1,\dots,f_g\) are linearly independent over \(\mathbb{C}\). By the Wronskian criterion \cite[Lemma VII.4.4]{Mir95}, \(W\) is not identically zero on \(U\).
	Thus, the vectors \(f(x),f^{(1)}(x),\dots,f^{(g-1)}(x)\) are linearly independent over a dense open subset of \(U\). The vanishing locus of \(W\) is independent of the choice of the coordinate function. Hence, we can define the dense open subset \(X^\circ\subseteq X\) of points where the Wronskian does not vanish.
	
	For every \(k\in\{1,\dots,g\}\), we write 
	\begin{align*}
	\mathcal{F}_k=\mathcal{O}_Uf+\mathcal{O}_Uf^{(1)}+\dots+\mathcal{O}_Uf^{(k-1)}\subseteq \mathcal{O}_U^{\oplus g}
	\end{align*}
	for the coherent \(\mathcal{O}_U\)-submodule generated by the vector-valued functions \(f\), \(f^{(1)}\), \dots, \(f^{(k-1)}\). It has generic rank \(k\). It follows from the definition that
	\begin{align}\label{equ_Fk-derivative}
	\frac{d}{dz}(\mathcal{F}_k)\subseteq \mathcal{F}_{k+1}
	\end{align}
	for all \(k\in\{1,\dots,g-1\}\).	
	If we replace the local coordinate function \(z\) by another local coordinate \(\widetilde{z}\), we have to replace the functions \(f_j\) by \(\widetilde{f}_j=af_j\), where \(a=\frac{dz}{d\widetilde{z}}\) is a holomorphic function on \(U\). We use the notation \(\widetilde{f}^{(k)}_j=\frac{d^k\widetilde{f}_j}{d\widetilde{z}^k}\) and \(\widetilde{f}^{(k)}=\left(\widetilde{f}_1^{(k)},\dots,\widetilde{f}_g^{(k)}\right)^t\) for all \(j\in\{1,\dots,g\}\) and \(k\in\{0,\dots,g-1\}\). Since \(\frac{d}{d\widetilde{z}}=a \frac{d}{dz}\), we obtain
	\[\frac{d}{d\widetilde{z}}(\mathcal{F}_k)\subseteq \mathcal{F}_{k+1}\]
	for all \(k\in\{1,\dots,g-1\}\). Since \(\widetilde{f}=af\in \mathcal{F}_1\), we conclude that \(\widetilde{f}^{(k)}\in \mathcal{F}_l\) for all \(0\le k<l\le g\). This implies
	\[\widetilde{\mathcal{F}}_k=\mathcal{O}_U\widetilde{f}+\mathcal{O}_U \widetilde{f}^{(1)}+\dots+\mathcal{O}_U\widetilde{f}^{(k-1)}\subseteq \mathcal{F}_k\]
	for all \(k\in\{1,\dots,g\}\). By symmetry, we finally obtain \(\mathcal{F}_k=\widetilde{\mathcal{F}}_k\). That is, the \(\mathcal{O}_U\)-submodules \(\mathcal{F}_k\) do not depend on the choice of the local coordinate function and we can glue them together to global coherent subsheaves \(\mathcal{F}_k\subseteq \mathcal{O}_X^{\oplus g}\) for all \(k\in\{1,\dots,g\}\).
	
	The sheaves \(\mathcal{F}_k\) need not define subbundles at the zeros of the Wronskian. To address this issue, we take the saturation \(E_k=\operatorname{Sat}_{\mathcal{O}_X^{\oplus g}}\mathcal{F}_k\) of \(\mathcal{F}_k\) in \(\mathcal{O}_X^{\oplus g}\). Locally at a point \(p\in X\), we can describe \(E_k\) by its stalks
	\[E_{k,p}=\left\{ s\in \mathcal{O}_{X,p}^{\oplus g}~|~ \exists r\in\mathcal{O}_{X,p}\setminus\{0\} \text{ such that } rs\in (\mathcal{F}_k)_p\right\}.\]
	This is the minimal coherent subsheaf \(E_k\subseteq \mathcal{O}_{X}^{\oplus g}\) containing \(\mathcal{F}_k\) such that the quotient \(\mathcal{O}_X^{\oplus g}/E_k\) is torsion-free.
	By \cite[\href{https://stacks.math.columbia.edu/tag/0CC4}{Tag 0CC4}]{Stacks}, a coherent \(\mathcal{O}_X\)-module is torsion-free if and only if it is finite locally free. Thus, \(E_k\) is the smallest subbundle of \(\mathcal{O}_X^{\oplus g}\) containing \(\mathcal{F}_k\).
	The resulting flag of subbundles
	\[0\subsetneq E_1\subsetneq E_2\subsetneq\dots\subsetneq E_g=\mathcal{O}_X^{\oplus g}\]
	is called the \emph{osculating flag} associated with the basis \(\omega_1,\dots,\omega_g\in H^0(X,\Omega_X^1)\).
	
	Next, we discuss the projectors onto the subbundles \(E_k\). At every point \(x\in X\) and for every \(k\in\{1,\dots,g\}\), we can choose a neighborhood \(U\) of \(x\) and sections
	\(s_{k,1},\dots,s_{k,k}\in H^0\left(U,\mathcal{O}_X^{\oplus g}\right)\)
	which form an \(\mathcal{O}_U\)-basis of \(E_k|_U\).  We call the \(g\times k\) matrix \(S_k=\left(s_{k,1},\dots,s_{k,k}\right)\) a local frame of \(E_k\). 
	Since the columns of \(S_k\) are linearly independent, the matrix \(G_k=\overline{S}_k^tS_k\) is positive definite and therefore invertible.
	We define the matrix
	\[P_k=S_kG_k^{-1}\overline{S}_k^t.\]
	One checks directly that \(P_k\) is Hermitian.
	If we choose a different basis \[s'_{k,1},\dots,s'_{k,k}\in H^0(U,\mathcal{O}_X^{\oplus g}),\] the matrix \(S_k\) changes by an everywhere invertible \(k\times k\) matrix \(A\) to \(S_k'=S_k\cdot A\). In the definition of \(P_k\) the factor \(A\) cancels out. Thus, the matrix \(P_k\) is independent of the choice of the local frame and we can glue it to a global matrix \(P_k\in C^{\infty}(X,\operatorname{Herm}_g)\). By construction, the matrix \(P_k\) is the orthogonal projector onto the subbundle \(E_k\) with respect to the standard Hermitian metric on \(\mathcal{O}_X^{\oplus g}\). Since \(f\) is a local frame of \(E_1\), we obtain that \(P_1\) coincides with the matrix \(P\) from the previous section.
	
	We conclude this section by constructing some canonical sections of the line bundles \(\det(E_k)\otimes (\Omega_X^1)^{\otimes k(k+1)/2}\) which will be needed later. With the notation as above, we define locally
	\[\mathcal{W}_k=\left(f\wedge f^{(1)}\wedge\dots\wedge f^{(k-1)}\right)(dz)^{k(k+1)/2}\in H^0\left(U,\det E_k\otimes (\Omega_X^1)^{\otimes k(k+1)/2}\right)\]
	for every \(k\in\{1,\dots,g\}\).
	One checks that \(\mathcal{W}_k\) is independent of the choice of the local coordinate function. Indeed, by induction one shows that for another local coordinate \(\widetilde{z}\) we get
	\[\widetilde{f}^{(r)}=a^{r+1}f^{(r)}+\sum_{s=0}^{r-1}b_{rs}f^{(s)},\]
	for all \(r\in\{1,\dots,g-1\}\), where the \(b_{rs}\)'s are suitable holomorphic functions on \(U\). Since only the top terms are relevant for the wedge product in \(\mathcal{W}_k\), it changes by
	\[\widetilde{f}\wedge \widetilde{f}^{(1)}\wedge\dots\wedge \widetilde{f}^{(k-1)}=a^{k(k+1)/2}\cdot \left(f\wedge f^{(1)}\wedge\dots\wedge f^{(k-1)}\right).\]
	On the other hand, \((dz)^{k(k+1)/2}\) changes to \[(d\widetilde{z})^{k(k+1)/2}=a^{-k(k+1)/2}(dz)^{k(k+1)/2}.\]
	Thus, we obtain a global section 
	\[\mathcal{W}_k\in H^0\left(X,\det E_k\otimes (\Omega_X^1)^{\otimes k(k+1)/2}\right)\setminus\{0\}\]
	which is non-zero since the Wronskian does not vanish identically.
	
	\section{The bilinear form \texorpdfstring{\(\mathcal E\)}{E} on the projector matrices}
	We compute the values of the bilinear form \(\mathcal{E}\) on pairs of the projector matrices \(P_j\) and the matrix \(V\). We continue the notation from the previous sections. The following proposition is the main result of this section.
	\begin{Pro}\label{pro_E-projectors}
			Let \(k,l\in\{1,\dots,g\}\) satisfy \(k \neq l\). We have
			\begin{enumerate}[label=(\alph*)]
				\item \(\mathcal{E}(P_k,P_l)=0\),
				\item \(\mathcal{E}(P_k,P_k)=-2\deg(E_k)\),
				\item \(\operatorname{tr}(P_k\Delta V)=g-k\) and hence \(\mathcal{E}(P_k,V)=g-k\).
			\end{enumerate}
	\end{Pro}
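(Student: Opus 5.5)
The plan is to derive first-order identities for the projectors \(P_k\) from the holomorphicity of the flag, and then read off (a), (b) and (c). Part (c) is formal, so I start with it.

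\emph{Part (c).} By Lemma \ref{lem_zkE}(b) we have \(\Delta V=F=gP_1-I_g\). Since \(E_1\subseteq E_k\), we get \(P_kP_1=P_1\). Together with \(\operatorname{tr}P_1=\operatorname{tr}P=1\) this gives
\[\operatorname{tr}(P_k\Delta V)=g\operatorname{tr}(P_1)-\operatorname{tr}(P_k)=g-k.\]
Equation \eqref{equ_E-identity} of Lemma \ref{lem_bilinear-form} with \(A=P_k\), \(B=V\), together with \(\int_X\mu=1\), then yields \(\mathcal{E}(P_k,V)=g-k\).

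\emph{Structural identities.} Let \(S_k\) be a local holomorphic frame of \(E_k\). Differentiating \(P_kS_k=S_k\) with \(\overline{\partial}\) and using \(\overline{\partial}S_k=0\) gives \((\overline{\partial}P_k)P_k=0\). Differentiating \(P_k^2=P_k\) and combining with the previous identity gives
\[\overline{\partial}P_k=P_k(\overline{\partial}P_k)(I-P_k).\]
Taking adjoints (recall \(P_k\) is Hermitian) gives
\[\partial P_k=(I-P_k)(\partial P_k)P_k.\]
Next, apply \(\partial\) to \(P_lS_k=S_k\) for \(k<l\). This gives \((\partial P_l)S_k=(I-P_l)\partial S_k\). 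By \eqref{equ_Fk-derivative}, the columns of \(\partial S_k\) lie in \(E_{k+1}\subseteq E_l\), first on \(X^\circ\) and then everywhere by density and continuity. Hence \((I-P_l)(\partial P_l)P_k=0\) for \(k<l\). By the structural identity, this is the same as \((\partial P_l)P_k=0\). Taking adjoints gives \(P_k\,\overline{\partial}P_l=0\).

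\emph{Part (a).} By symmetry of \(\mathcal{E}\) (Lemma \ref{lem_bilinear-form}) we may assume \(k<l\). Using \(\partial P_k=(I-P_k)(\partial P_k)P_k\), cyclicity of the trace gives
\[\operatorname{tr}(\partial P_k\wedge\overline{\partial}P_l)=\operatorname{tr}\bigl((I-P_k)\,\partial P_k\wedge P_k\,\overline{\partial}P_l\bigr)=0,\]
since \(P_k\overline{\partial}P_l=0\). Integrating gives \(\mathcal{E}(P_k,P_l)=0\).

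\emph{Part (b).} Here I would compute \(\deg E_k\) via the Chern form of \(\det E_k\) with the metric induced from the standard metric, namely \(\det G_k\) with \(G_k=\overline{S}_k^tS_k\). The curvature form is
\[-\partial\overline{\partial}\log\det G_k=-\overline{\partial}\operatorname{tr}\bigl(G_k^{-1}\overline{S}_k^t\,\partial S_k\bigr),\]
and \(\deg E_k=\frac{i}{2\pi}\int_X\operatorname{tr}(\text{curvature})\). Expanding \(\overline{\partial}\) of \(\operatorname{tr}(G_k^{-1}\overline{S}_k^t\partial S_k)\) using \(\overline{\partial}S_k=0\) should yield the Gauss formula
\[\partial\overline{\partial}\log\det G_k=\operatorname{tr}\bigl(P_k\,\overline{\partial}P_k\wedge\partial P_k\bigr)=\operatorname{tr}\bigl(\overline{\partial}P_k\wedge\partial P_k\bigr),\]
where the second equality uses the structural identities. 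Since \(\operatorname{tr}(\overline{\partial}P\wedge\partial P)=-\operatorname{tr}(\partial P\wedge\overline{\partial}P)\), this gives
\[\deg E_k=-\frac{i}{2\pi}\int_X\operatorname{tr}(\partial P_k\wedge\overline{\partial}P_k)=-\tfrac12\,\mathcal{E}(P_k,P_k).\]
The main obstacle I expect is keeping track of signs and constant factors in this curvature computation. As a check, \(\deg E_k\le0\) is consistent with \(\mathcal{E}\ge0\), and for \(k=1\), where \(E_1\cong(\Omega^1_X)^{-1}\), one should recover \(\deg E_1=-(2g-2)\).
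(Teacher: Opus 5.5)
Your route is the paper's. Part (c) is identical. In part (a), your identity \(P_k\,\overline{\partial}P_l=0\) for \(k<l\) (the adjoint of \((\partial P_l)P_k=0\)) is an equivalent repackaging of the paper's \(\partial P_k=P_l(\partial P_k)P_l\). It comes from the same two inputs, \((\overline{\partial}P_k)P_k=0\) and \(\partial E_k\subseteq E_{k+1}\otimes\Omega^1_X\). Both parts are fine.

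Part (b), however, is not correctly derived as written.

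\emph{Where the sign goes wrong.} You have \(\partial\log\det G_k=\operatorname{tr}(G_k^{-1}\overline{S}_k^t\partial S_k)\), and \(-\partial\overline{\partial}=\overline{\partial}\partial\) on functions. So the curvature of \(\det E_k\) is \(+\overline{\partial}\operatorname{tr}(G_k^{-1}\overline{S}_k^t\partial S_k)\), not \(-\overline{\partial}\operatorname{tr}(\cdots)\). The expansion you anticipate actually gives
\[\overline{\partial}\partial\log\det G_k=\operatorname{tr}(\overline{\partial}P_k\wedge\partial P_k),\qquad\text{equivalently}\qquad \partial\overline{\partial}\log\det G_k=\operatorname{tr}(\partial P_k\wedge\overline{\partial}P_k).\]
So your ``Gauss formula'' has the wrong sign. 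As a check, for the frame \((1,z)^t\) both sides of the corrected identity equal \((1+|z|^2)^{-2}\,dz\wedge d\overline{z}\).

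\emph{Consequence.} Taken literally, your chain yields \(\deg E_k=+\frac{i}{2\pi}\int_X\operatorname{tr}(\partial P_k\wedge\overline{\partial}P_k)=+\frac12\mathcal{E}(P_k,P_k)\ge 0\). This contradicts \(\deg E_1=2-2g\). Your correct final formula appears only because the last ``this gives'' step introduces a second, compensating sign flip.

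\emph{Fix.} This sign is exactly what later forces \(\deg E_k<0\) and drives the lower bound. So the step you leave as ``should yield'' must actually be carried out, as in the paper's Lemma~\ref{lem_c1}. One gets
\[\overline{\partial}\operatorname{tr}(G_k^{-1}\partial G_k)=\operatorname{tr}\bigl(G_k^{-1}\overline{(\partial S_k)}^t(I_g-P_k)\partial S_k\bigr).\]
Using \(\partial P_k=(I_g-P_k)(\partial S_k)G_k^{-1}\overline{S}_k^t\) and graded cyclicity of the trace, this equals \(-\operatorname{tr}(\partial P_k\wedge\overline{\partial}P_k)\). Hence \(c_1(\det E_k)=-\frac{i}{2\pi}\operatorname{tr}(\partial P_k\wedge\overline{\partial}P_k)\), and therefore \(\mathcal{E}(P_k,P_k)=-2\deg E_k\).
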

	In the proof we will use the following two auxiliary lemmas. The first lemma gives some identities of the derivatives of the projectors \(P_k\).
	\begin{Lem}\label{lem_aux} We have the following identities:
		\begin{enumerate}[label=(\alph*)]
			\item For every \(1\le k\le g\) we have \(P_k(\partial P_k)=0\) and \(\partial P_k=(\partial P_k)P_k\).
			\item For every \(1\le k<l\le g\), we have \(\partial P_k=P_l(\partial P_k)P_l\).
		\end{enumerate}
	\end{Lem}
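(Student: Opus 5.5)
The plan is to work locally on an open set \(U\) with a local frame \(S_k\) of \(E_k\) whose columns are holomorphic sections of \(\mathcal{O}_X^{\oplus g}\). This is possible since \(E_k\) is a holomorphic subbundle, so we may take \(S_k\) holomorphic and \(\overline{S}_k^t\) antiholomorphic. With \(G_k=\overline{S}_k^tS_k\) and \(P_k=S_kG_k^{-1}\overline{S}_k^t\), we have \(\partial \overline{S}_k^t=0\), hence
\[\partial P_k=(\partial S_k)G_k^{-1}\overline{S}_k^t-S_kG_k^{-1}(\partial G_k)G_k^{-1}\overline{S}_k^t,\qquad \partial G_k=\overline{S}_k^t\,\partial S_k.\]
Both terms end in \(\overline{S}_k^t\), and \(\overline{S}_k^tP_k=\overline{S}_k^t\). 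This gives \(\partial P_k=(\partial P_k)P_k\).

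For \(P_k(\partial P_k)=0\), I will use \(P_kS_k=S_k\). This yields
\[P_k\partial P_k=P_k(\partial S_k)G_k^{-1}\overline{S}_k^t-S_kG_k^{-1}\overline{S}_k^t(\partial S_k)G_k^{-1}\overline{S}_k^t=P_k(\partial S_k)G_k^{-1}\overline{S}_k^t-P_k(\partial S_k)G_k^{-1}\overline{S}_k^t=0.\]
Alternatively, one can differentiate \(P_k^2=P_k\) to get \(P_k\partial P_k+\partial P_k\,P_k=\partial P_k\), and then combine this with the previous identity. Either route proves (a).

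For (b), let \(k<l\). Since \(\partial P_k=(\partial P_k)P_k\) and \(E_k\subseteq E_l\), we have \(P_kP_l=P_k\). Hence \((\partial P_k)P_l=(\partial P_k)P_kP_l=(\partial P_k)P_k=\partial P_k\). The remaining step is \(P_l\,\partial P_k=\partial P_k\), and this is where the osculating property is needed. By the formula above, the image of \(\partial P_k\) lies in the span of the columns of \(\partial S_k\) and of \(S_k\). The columns of \(S_k\) lie in \(E_k\subseteq E_l\).

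The main obstacle is to show that the columns of \(\partial S_k=\frac{dS_k}{dz}dz\) lie in \(E_{k+1}\subseteq E_l\). I would prove this as follows. On \(X^\circ\), we may take \(S_k=(f,f^{(1)},\dots,f^{(k-1)})\), and then \(\frac{d}{dz}\) maps \(\mathcal{F}_k\) into \(\mathcal{F}_{k+1}\) by \eqref{equ_Fk-derivative}. A general holomorphic frame of \(E_k\) differs from this one by a meromorphic change of basis \(S_k=(f,\dots,f^{(k-1)})M\). The derivative of \(M\) only contributes terms in \(E_k\) (tensored with meromorphic functions), so \(\frac{dS_k}{dz}\) has columns in \(E_{k+1}\otimes\mathcal{M}\). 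Since \(E_{k+1}\) is saturated, holomorphic sections of \(\mathcal{O}^{\oplus g}\) lying generically in \(E_{k+1}\) lie in \(E_{k+1}\). Therefore \(\frac{d}{dz}E_k\subseteq E_{k+1}\), and so \(P_l\,\partial S_k=\partial S_k\). It follows that \(P_l\,\partial P_k=\partial P_k\), and hence \(\partial P_k=P_l(\partial P_k)P_l\). By continuity, the identities, once established on the dense set \(X^\circ\) or directly with holomorphic frames, hold everywhere.
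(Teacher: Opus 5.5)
Your proof is correct and follows essentially the paper's route. The key step in (b) matches the paper's argument: $\frac{d}{dz}$ maps holomorphic sections of $E_k$ into $E_{k+1}$, obtained by extending $\frac{d}{dz}\mathcal{F}_k\subseteq\mathcal{F}_{k+1}$ across the zeros of the Wronskian via saturation, and you then combine it with $P_kP_l=P_k$ in the same way.

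The only variation is in (a). You verify the identities from the explicit formula for $\partial P_k$ in a holomorphic frame, which is essentially the formula the paper derives later in Lemma~\ref{lem_c1}(a). The paper instead deduces $(\overline{\partial}P_k)P_k=0$ from $P_ks=s$ for holomorphic $s$, takes the conjugate transpose, and differentiates $P_k^2=P_k$.
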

	\begin{proof}
		Since \(P_k\) is the projector onto \(E_k\), every local holomorphic section \(s\in H^0(U,E_k)\) satisfies \(P_ks=s\), and hence
		\[(\overline{\partial}P_k)s=\overline{\partial}(P_ks)-P_k\overline{\partial}s=0.\]
		We conclude that \((\overline{\partial}P_k)P_k=0\). This implies the first formula of (a) since
		\[P_k(\partial P_k)=(\overline{(\overline{\partial} P_k)P_k})^t=0,\]
		where we have used that \(P_k\) is Hermitian.
		Since \(P_k\) is a projector, we get
		\begin{align}\label{equ_PP-P}
		\partial P_k=\partial \left(P_k^2\right)=(\partial P_k)P_k+P_k(\partial P_k)=(\partial P_k)P_k,
		\end{align}
		the second formula of (a). This completes the proof of part (a).
		
		Next, we prove (b). The matrix \(\partial P_k\) acts on a local holomorphic section \(s\in H^0(U,E_k)\) by
		\begin{align}\label{equ_partial-P-s}
			(\partial P_k)s=\partial(P_ks)-P_k\partial s=(I_g-P_k)\partial s.
		\end{align}		
		Let us show that \(\partial s\in H^0(U,E_{k+1}\otimes \Omega_X^1)\).
		We have \(\mathcal{F}_k|_{X^{\circ}}=E_k|_{X^{\circ}}\) on \(X^\circ\), the non-vanishing locus of the Wronskian. From the inclusion in \eqref{equ_Fk-derivative} we obtain that
		\[\partial s|_{U\cap X^{\circ}}\in H^0(U\cap X^{\circ},E_{k+1}\otimes \Omega_X^1)\]
		for all local holomorphic sections \(s\in H^0(U,E_k)\). Thus, the image of the section \(\partial s\) in the bundle \((\mathcal{O}_X^{\oplus g}/E_{k+1})\otimes \Omega_X^1\) vanishes on the dense open subset \(U\cap X^{\circ}\) of \(U\). Hence, it vanishes identically on \(U\) and we obtain
		\[\partial s\in H^0(U,E_{k+1}\otimes \Omega_X^1).\]		
		
		Applying this to Equation \eqref{equ_partial-P-s}, we conclude that the image of the restriction of \(\partial P_k\) to \(E_k\) lies in \(E_{k+1}\).
		Combining this with Equation \eqref{equ_PP-P}, we see that the image of \(\partial P_k\) on the whole bundle \(\mathcal O_X^{\oplus g}\) lies in \(E_{k+1}\), and hence		
		\begin{align}\label{equ_Plk}
		P_{l}(\partial P_k)=\partial P_k
		\end{align}
		since \(l\ge k+1\).
		Since \(E_k\subseteq E_l\), we have \(P_kP_l=P_k\). Together with Equations \eqref{equ_PP-P} and \eqref{equ_Plk} this yields
		\[\partial P_k=(\partial P_k) P_k=(\partial P_k)P_k P_l=(\partial P_k) P_l=P_l(\partial P_k) P_l\]
		for all \(k<l\le g\). This proves (b).
	\end{proof}	
	The second auxiliary lemma connects the Chern form \(c_1(\det E_k)\) to the trace \(\mathrm{tr}(\partial P_k\wedge \overline{\partial}P_k)\). In the following, we will consider \(c_1(\det E_k)\) as a differential form by equipping \(\det E_k\) with the Hermitian metric induced by the standard Hermitian metric on \(\mathcal{O}_X^{\oplus g}\).
	\begin{Lem}\label{lem_c1}
		Let \(k\in\{1,\dots,g\}\). With the notation as above, we have the following identities:
		\begin{enumerate}[label=(\alph*)]
			\item \(\partial P_k=(I_g-P_k)(\partial S_k) G_k^{-1}\overline{S}_k^t\),
			\item \(c_1(\det E_k)=-\frac{i}{2\pi}\mathrm{tr}(\partial P_k\wedge \overline{\partial}P_k)\).
		\end{enumerate}
	\end{Lem}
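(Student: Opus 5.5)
We work locally on an open set \(U\) over which \(E_k\) has a holomorphic frame \(S_k\), so that \(P_k=S_kG_k^{-1}\overline{S}_k^t\) with \(G_k=\overline{S}_k^tS_k\). Both claimed identities are local and independent of the frame, since \(P_k\) is frame-independent and the Chern form of \(\det E_k\) does not depend on the choice of local trivialization. It therefore suffices to verify them on such \(U\). The key structural input is that \(S_k\) is holomorphic, hence \(\overline{\partial}S_k=0\) and \(\partial\overline{S}_k^t=0\).

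For (a), we differentiate \(P_k=S_kG_k^{-1}\overline{S}_k^t\) using the Leibniz rule. Since \(\partial\overline{S}_k^t=0\), we have \(\partial G_k=\overline{S}_k^t\,\partial S_k\), and \(\partial(G_k^{-1})=-G_k^{-1}(\partial G_k)G_k^{-1}\). The term coming from \(\partial\overline{S}_k^t\) vanishes, so
\[\partial P_k=(\partial S_k)G_k^{-1}\overline{S}_k^t-S_kG_k^{-1}\overline{S}_k^t(\partial S_k)G_k^{-1}\overline{S}_k^t=(I_g-P_k)(\partial S_k)G_k^{-1}\overline{S}_k^t.\]
Taking the conjugate transpose (recall \(P_k\) is Hermitian) gives the companion formula
\[\overline{\partial}P_k=S_kG_k^{-1}(\overline{\partial}\,\overline{S}_k^t)(I_g-P_k).\]

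For (b), the induced metric on \(\det E_k\) satisfies \(\|s_{k,1}\wedge\dots\wedge s_{k,k}\|^2=\det G_k\). With the standard normalization (which gives the Fubini--Study form for \(\mathcal{O}(1)\)), this yields
\[c_1(\det E_k)=-\frac{i}{2\pi}\partial\overline{\partial}\log\det G_k=\frac{i}{2\pi}\overline{\partial}\partial\log\det G_k.\]
We will compute
\[\partial\log\det G_k=\operatorname{tr}\bigl(G_k^{-1}\overline{S}_k^t\partial S_k\bigr).\]
We then apply \(\overline{\partial}\), using \(\overline{\partial}G_k=(\overline{\partial}\,\overline{S}_k^t)S_k\) and \(\overline{\partial}\partial S_k=0\). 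This should give, up to a sign to be tracked,
\[\overline{\partial}\partial\log\det G_k=\operatorname{tr}\bigl(G_k^{-1}(\overline{\partial}\,\overline{S}_k^t)(I_g-P_k)\wedge\partial S_k\bigr).\]

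On the other side, we multiply the formulas for \(\partial P_k\) and \(\overline{\partial}P_k\). Using \(\overline{S}_k^tS_k=G_k\) and \((I_g-P_k)^2=I_g-P_k\), we get
\[\operatorname{tr}(\partial P_k\wedge\overline{\partial}P_k)=\operatorname{tr}\bigl((I_g-P_k)\partial S_k\,G_k^{-1}\wedge\overline{\partial}\,\overline{S}_k^t(I_g-P_k)\bigr).\]
By cyclicity, this equals minus the expression above, because moving a matrix-valued \(1\)-form past another inside the trace introduces a sign: \(\operatorname{tr}(\alpha\wedge\beta)=-\operatorname{tr}(\beta\wedge\alpha)\). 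Comparing the two computations then gives (b).

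The main obstacle is the sign bookkeeping: the sign conventions in \(c_1\), the order \(\overline{\partial}\partial\) versus \(\partial\overline{\partial}\), and the anticommutation of \(1\)-forms under the trace. As a sanity check, we will verify the sign in the case \(k=1\), \(g=2\) with \(S_1=(1,z)^t\). Here the right-hand side should reproduce \(-\frac{i}{2\pi}\partial\overline{\partial}\log(1+|z|^2)\), which is the curvature of the subbundle \(\mathcal{O}(-1)\).
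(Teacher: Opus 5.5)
Your proposal is correct and follows the paper's proof. For (b) it is the same computation of \(\overline{\partial}\operatorname{tr}(G_k^{-1}\partial G_k)\), compared via graded cyclicity with \(\operatorname{tr}(\partial P_k\wedge\overline{\partial}P_k)\), and the sign you leave ``to be tracked'' in fact comes out exactly as you wrote it.

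The only difference is in (a): you expand \(P_k=S_kG_k^{-1}\overline{S}_k^t\) directly by the Leibniz rule, whereas the paper combines \(P_kS_k=S_k\) with \(\partial P_k=(\partial P_k)P_k\) from Lemma \ref{lem_aux}(a). Both routes are equally valid.
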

	\begin{proof}
		We first prove (a). By the definition of \(P_k\), we have \(\overline{S}_k^tP_k=\overline{S}_k^t\). Thus,
		\[0=\partial(\overline{S}_k^tP_k)=\overline{S}_k^t(\partial P_k).\]
		By the definition of \(P_k\) we also have \(P_kS_k=S_k\). Thus,
		\[(\partial P_k) S_k=\partial (P_k S_k)-P_k(\partial S_k)=(I_g-P_k)\partial S_k.\]
		Applying Lemma \ref{lem_aux} (a), we obtain
		\[\partial P_k=(\partial P_k) P_k=(\partial P_k) S_k G_k^{-1} \overline{S}_k^t=(I_g-P_k)(\partial S_k) G_k^{-1}\overline{S}_k^t.\]
		This shows (a).
		
		To prove (b), we also fix a local frame \(S_k\) of \(E_k\).
		The wedge product \(s_{k,1}\wedge\dots\wedge s_{k,k}\) induced by the local frame \(S_k=(s_{k,1}, \dots, s_{k,k})\) is a local holomorphic frame of the line bundle \(\det E_k\). Its norm is given by
		\[\|s_{k,1}\wedge\dots\wedge s_{k,k}\|^2=\det(G_k),\]
		where \(G_k=\overline{S}_k^t S_k\).
		Thus, the curvature formula gives
		\[c_1(\det E_k)=-\frac{i}{2\pi}\partial\overline{\partial}\log \det(G_k)=\frac{i}{2\pi}\overline{\partial}\partial\log \det(G_k)\]
		locally where \(S_k\) is defined.
		By Jacobi's formula, we get
		\[\partial \log \det(G_k)=\frac{\partial\det(G_k)}{\det(G_k)}=\frac{\det G_k\operatorname{tr}(G_k^{-1}(\partial G_k))}{\det G_k}=\operatorname{tr}(G_k^{-1}(\partial G_k)).\]
		
		To compute \(\overline{\partial}\operatorname{tr}(G_k^{-1}\partial G_k)\), we have to study the derivatives of \(G_k\) and \(G_k^{-1}\) in more detail. Differentiating \(G_kG_k^{-1}=I_k\) gives \(G_k(\overline{\partial}G_k^{-1})=-(\overline{\partial}G_k)G_k^{-1}\) and hence
		\[\overline{\partial} G_k^{-1}=-G_k^{-1}(\overline{\partial}G_k)G_k^{-1}.\]
		Since \(S_k\) is holomorphic, we have \(\overline{\partial}S_k=0\) and \(\partial \overline{S}_k^t=0\). For simplicity, we omit wedge symbols in products of matrix-valued differential forms.
		Since \(G_k=\overline{S}_k^t S_k\), it follows that
		\[\partial G_k=\overline{S}_k^t(\partial S_k),\qquad \overline{\partial}G_k=\overline{(\partial S_k)}^tS_k,\qquad \overline{\partial}\partial G_k=\overline{\partial}\left(\overline{S}_k^t(\partial S_k)\right)=\overline{\left(\partial S_k\right)}^t(\partial S_k).\]
		We can now compute that
		\begin{align*}
			\overline{\partial}\operatorname{tr}(G_k^{-1}(\partial G_k))&=\operatorname{tr}\left(G_k^{-1}(\overline{\partial}\partial G_k)+(\overline{\partial}G_k^{-1})(\partial G_k)\right)\\
			&=\operatorname{tr}\left(G_k^{-1}\overline{(\partial S_k)}^t (\partial S_k)-G_k^{-1}\overline{(\partial S_k)}^tS_kG_k^{-1} \overline{S_k}^t(\partial S_k)\right)\\
			&=\operatorname{tr}\left(G_k^{-1}\overline{(\partial S_k)}^t(I_g-P_k)(\partial S_k)\right).
		\end{align*}
		
		Next, we connect this expression to the derivatives of the projector \(P_k\).		
		Taking the conjugate transpose of the formula in part (a), we get
		\[\overline{\partial}P_k=\overline{(\partial P_k)}^t=\overline{((I_g-P_k)(\partial S_k)G_k^{-1}\overline{S}_k^t)}^t=S_k G_k^{-1}\overline{(\partial S_k)}^t(I_g-P_k)\]
		using that \(P_k\), \((I_g-P_k)\), and \(G_k^{-1}\) are Hermitian matrices. Using the graded cyclicity of the trace and the identity \((I_g-P_k)^2=(I_g-P_k)\), we compute
		\begin{align*}
			\operatorname{tr}(\partial P_k\wedge\overline{\partial}P_k)&=\operatorname{tr}((I_g-P_k)(\partial S_k) G_k^{-1}\overline{S}_k^tS_k G_k^{-1}\overline{(\partial S_k)}^t(I_g-P_k))\\
			&=-\operatorname{tr}(G_k^{-1}\overline{S}_k^tS_k G_k^{-1}\overline{(\partial S_k)}^t(I_g-P_k)(\partial S_k))\\
			&=-\operatorname{tr}(G_k^{-1}\overline{(\partial S_k)}^t(I_g-P_k)(\partial S_k))=-\overline{\partial}\operatorname{tr}(G_k^{-1}(\partial G_k)).
		\end{align*}
		Putting everything together, we conclude
		\begin{align*}
			c_1(\det E_k)=\frac{i}{2\pi}\overline{\partial}\partial\log\det(G_k)=\frac{i}{2\pi}\overline{\partial}\operatorname{tr}(G_k^{-1}(\partial G_k))=-\frac{i}{2\pi}\operatorname{tr}(\partial P_k\wedge\overline{\partial} P_k).
		\end{align*}
		This completes the proof of the lemma.
	\end{proof}
	
	We can now give the proof of Proposition \ref{pro_E-projectors}.
	\begin{proof}[Proof of Proposition \ref{pro_E-projectors}]
		We first prove (a). By the symmetry of \(\mathcal{E}\), we may assume that \(k<l\).
		Using Lemma \ref{lem_aux} (b) and the cyclicity of the trace, we obtain
		\[\operatorname{tr}(\partial P_k\wedge\overline{\partial}P_l)=\operatorname{tr}(P_l(\partial P_k)P_l\wedge\overline{\partial}P_l)=\operatorname{tr}((\partial P_k)P_l\wedge(\overline{\partial}P_l)P_l)=0.\]
		Integrating over \(X\) gives \(\mathcal{E}(P_k,P_l)=0\). This proves (a).
		Part (b) follows directly from Lemma \ref{lem_c1} since
		\[\mathcal{E}(P_k,P_k)=\frac{i}{\pi}\int_X\operatorname{tr}(\partial P_k\wedge \overline{\partial}P_k)=-2\int_X c_1(\det E_k)=-2\deg(E_k).\]
		
		Now we prove (c). By Lemma \ref{lem_zkE} (b), we have
		\[\operatorname{tr}(P_k\Delta V)=\operatorname{tr}(P_k(gP_1-I_g))=g\operatorname{tr}(P_1)-\operatorname{tr}(P_k),\]
		where \(P_kP_1=P_1\) follows from \(E_1\subseteq E_k\). Since \(P_j\) is the projector matrix onto the subbundle \(E_j\) of rank \(j\) in \(\mathcal{O}_X^{\oplus g}\), we have \(\operatorname{tr}(P_j)=j\) for all \(j\in\{1,\dots,g\}\). Applying this to the above formula, we obtain the first statement
		\[\operatorname{tr}(P_kF)=g-k.\]		
		By Lemma \ref{lem_bilinear-form} and the identity \(\Delta V=F\), we have
		\[\mathcal{E}(P_k,V)=\int_X\operatorname{tr}(P_kF)\mu=(g-k)\int_X \mu=g-k.\]
		This shows the second statement of (c). 
	\end{proof}
	
	\section{The main result for general curves}
	We give the proof of the first statement of Theorem \ref{mainthm} in the case of general curves. We continue the notation from the previous sections. We first prove the following lemma which gives a lower bound of the Zhang--Kawazumi invariant in terms of the degrees \(\deg E_k\).
	\begin{Lem}\label{lem_lower-bound}
		We have \(\deg E_k<0\) for all \(k\in\{1,\dots,g-1\}\) and
		\[\varphi(X)> \sum_{k=1}^{g-1}\frac{(g-k)^2}{(-\deg E_k)}.\]
	\end{Lem}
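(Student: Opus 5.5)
We plan to apply the positivity of \(\mathcal{E}\) to a well-chosen linear combination of \(V\) and the projectors, using the values computed in Proposition \ref{pro_E-projectors}. The first step is to establish \(\deg E_k<0\) for \(1\le k\le g-1\). By Proposition \ref{pro_E-projectors} (b) and Lemma \ref{lem_bilinear-form}, we have \(-2\deg E_k=\mathcal{E}(P_k,P_k)\ge 0\), with equality if and only if \(P_k\) is constant.

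So it suffices to show that \(P_k\) is not constant for \(k\le g-1\). If \(P_k\) were constant, then \(E_k\) would be a constant subspace \(L\subsetneq\mathbb{C}^g\) of dimension \(k\). Since \(f\) is a local frame of \(E_1\subseteq E_k\), the vector \(f(x)\) would lie in \(L\) for all \(x\). A nonzero linear functional vanishing on \(L\) would then give a nontrivial linear relation among \(f_1,\dots,f_g\). This contradicts the linear independence of \(\omega_1,\dots,\omega_g\), so \(\deg E_k<0\).

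Next, for real parameters \(t_1,\dots,t_{g-1}\), consider \(A=V+\sum_{k=1}^{g-1}t_kP_k\). By bilinearity, symmetry, and Proposition \ref{pro_E-projectors} (a)–(c), we get
\[0\le\mathcal{E}(A,A)=\mathcal{E}(V,V)+2\sum_{k=1}^{g-1}t_k(g-k)-2\sum_{k=1}^{g-1}t_k^2\deg E_k.\]
Here the cross terms \(\mathcal{E}(P_k,P_l)\) with \(k\ne l\) vanish. Each summand \(2t_k(g-k)+2t_k^2(-\deg E_k)\) is a quadratic in \(t_k\) with positive leading coefficient, since \(-\deg E_k>0\). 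It is minimized at \(t_k=\frac{g-k}{2\deg E_k}\), where it takes the value \(-\frac{(g-k)^2}{2(-\deg E_k)}\).

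With this choice of \(t_k\) we obtain
\[\mathcal{E}(V,V)\ge\sum_{k=1}^{g-1}\frac{(g-k)^2}{2(-\deg E_k)}.\]
Multiplying by \(2\) and using Lemma \ref{lem_zkE} (c), which gives \(\varphi(X)=2\mathcal{E}(V,V)\), yields the claimed inequality with \(\ge\).

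The main obstacle is strictness. By Lemma \ref{lem_bilinear-form}, equality would force \(A=V+\sum t_kP_k\) to be constant, hence \(\Delta A=0\). Applying \(\Delta\) and using \(\Delta V=F=gP_1-I_g\) gives
\[gP_1-I_g+\sum_{k=1}^{g-1}t_k\Delta P_k=0.\]
We take the trace of this identity. Since \(\operatorname{tr}P_k=k\) is constant, we have \(\operatorname{tr}\Delta P_k=\Delta\operatorname{tr}P_k=0\), so the trace gives \(g-g=0\), which is no contradiction. A cleverer test is needed.

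The plan is to pair the constant matrix \(A\) against \(P_1\) via \(\mathcal{E}\). On one hand \(\mathcal{E}(P_1,A)=0\), because \(\mathcal{E}(P_1,A)=\int_X\operatorname{tr}(P_1\Delta A)\mu\) and \(\Delta A=0\). On the other hand, Proposition \ref{pro_E-projectors} gives
\[\mathcal{E}(P_1,A)=(g-1)-2t_1\deg E_1=(g-1)-(g-1)=0,\]
so this pairing is also consistent. Pairing with any \(P_k\) is likewise automatically consistent, because the \(t_k\) were chosen as critical points.

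Instead, I would derive the contradiction from \(V\) itself. If \(A\) is constant, then \(V=c-\sum t_kP_k\) for a constant matrix \(c\). Consequently \(V\) takes values in the span of the projectors, and \(F=\Delta V=-\sum t_k\Delta P_k\). I would then examine this identity at a point where \(P_k\) and \(\partial P_k\) are explicit. The most concrete test is the component of \(F\) mapping \(E_1\) to \(E_1^\perp\). This component vanishes, since \(F=gP_1-I_g\) commutes with \(P_1\). On the other hand, the corresponding component of \(\Delta P_k\) can be computed from the second-order terms of Lemma \ref{lem_aux}, and it should be nonzero generically. Hence the identity should fail wherever the osculating flag genuinely moves, and I expect that analysing these off-diagonal blocks near a generic point, where the Wronskian does not vanish, is where the real work lies.
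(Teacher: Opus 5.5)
Your proof that \(\deg E_k<0\) is correct. You use the linear independence of \(\omega_1,\dots,\omega_g\) directly, whereas the paper derives \(g-k=\operatorname{tr}(P_k\int_XF\mu)=0\) from constancy of \(P_k\); both work. The non-strict inequality is also correct, and \(t_k=\frac{g-k}{2\deg E_k}\) is exactly the paper's choice.

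The gap is the strict inequality, which the lemma asserts and which you leave open. Moreover, the test you propose cannot succeed, because the block \((I_g-P_1)(\Delta P_k)P_1\) vanishes identically for every \(k\). Differentiating \(P_k\partial P_k=0\) and \((\overline{\partial}P_k)P_k=0\), together with \(P_k^2=P_k\), gives
\(\partial\overline{\partial}P_k=\overline{\partial}P_k\wedge\partial P_k+\partial P_k\wedge\overline{\partial}P_k\).
By the argument in the proof of Lemma \ref{lem_aux}, \(\partial P_k\) annihilates \(E_{k-1}\) and \(E_k^{\perp}\) and takes values in \(E_{k+1}\cap E_k^{\perp}\). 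Write \(\pi_j\) for the orthogonal projector onto the line \(E_j\cap E_{j-1}^{\perp}\). Then \(\Delta P_k=\lambda_k(\pi_k-\pi_{k+1})\) with \(\lambda_k=\operatorname{tr}(P_k\Delta P_k)\ge 0\).

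Since \(F=(g-1)\pi_1-\sum_{j\ge 2}\pi_j\) is diagonal in the same splitting, the identity \(F=-\sum_k t_k\Delta P_k\) carries no off-diagonal information at all. It is equivalent to \(\lambda_k\equiv-2\deg E_k\) for \(1\le k\le g-1\), which is even consistent on average, since \(\int_X\lambda_k\mu=\mathcal{E}(P_k,P_k)=-2\deg E_k\). So what must be shown is that some \(\lambda_l\) is not constant. Looking at generic points where the flag moves will not help.

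The missing idea, which is the content of the paper's Lemma \ref{lem_not-constant}, is to test pointwise against \(P_l\) at a zero of the Wronskian, where the flag is stationary.

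\(\mathcal{W}_g\) is a non-zero section of the ample bundle \((\Omega_X^1)^{\otimes g(g+1)/2}\), so it vanishes at some point \(p\). Since \(\mathcal{W}_1\) vanishes nowhere, there is \(l\in\{1,\dots,g-1\}\) with \(\mathcal{W}_l(p)\neq 0=\mathcal{W}_{l+1}(p)\). Then \(f^{(l)}(p)\in E_l(p)\), so \((I_g-P_l)(\partial S_l)(p)=0\) for the local frame \(S_l=(f,\dots,f^{(l-1)})\). Lemma \ref{lem_c1} (a) then gives \(\partial P_l(p)=0=\overline{\partial}P_l(p)\), and in particular \(\lambda_l(p)=0\neq -2\deg E_l\).

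The paper turns this into a contradiction without computing \(\Delta P_k\). Suppose \(V+\sum_k t_kP_k\) is constant. Then \(\operatorname{tr}(P_l\partial V)=-\sum_k t_k\operatorname{tr}(P_lP_k\partial P_k)=0\) on all of \(X\). Applying \(\overline{\partial}\) at \(p\) gives \(0=\operatorname{tr}(P_l\overline{\partial}\partial V)(p)=-\pi i\operatorname{tr}(P_lF)\mu=-\pi i(g-l)\mu\) at \(p\). This is impossible since \(l<g\) and \(\mu\) is positive.
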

	The idea of the proof is to apply the positive semi-definite form \(\mathcal{E}\) to a matrix of the form \(V+\sum_{k=1}^g a_k P_k\). Let us first check in a separate lemma that this matrix is not constant on \(X\).
	\begin{Lem}\label{lem_not-constant}
		For all \(a\in \mathbb{R}^g\) the matrix \(V+\sum_{k=1}^g a_k P_k\) is not constant on \(X\).
	\end{Lem}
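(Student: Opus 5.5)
The plan is to argue by contradiction. Suppose \(A=V+\sum_{k=1}^g a_kP_k\) is constant on \(X\) for some \(a\in\mathbb{R}^g\). Applying \(\Delta\) entrywise and using Lemma \ref{lem_zkE} (b) gives the pointwise identity
\[gP_1-I_g=F=\Delta V=-\sum_{k=1}^{g}a_k\Delta P_k=-\sum_{k=1}^{g-1}a_k\Delta P_k,\]
since \(P_g=I_g\) is constant. Equivalently, \((gP_1-I_g)\mu=-\frac{1}{\pi i}\sum_{k<g}a_k\,\partial\overline{\partial}P_k\) as matrix-valued \((1,1)\)-forms. The strategy is to multiply this identity by suitable projectors and take traces. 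The choice is made so that most terms on the right vanish by Lemma \ref{lem_aux}, and what remains is a Chern form from Lemma \ref{lem_c1}.

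\textbf{Extracting coefficients.} Pairing the constant matrix \(A\) against \(P_k\) and \(V\) gives \(\mathcal{E}(A,B)=0\). By Proposition \ref{pro_E-projectors} this yields \((g-k)-2a_k\deg E_k=0\) for \(k<g\), so every \(a_k\) with \(k<g\) is nonzero and finite once we know \(\deg E_k\neq 0\). Pairing against \(V\) gives \(\varphi(X)=-2\sum_k a_k(g-k)\).

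These integrated relations alone are consistent, so a pointwise contradiction is needed. Take \(Q=I_g-P_{g-1}\) and trace the identity against \(Q\). The left side gives \(\operatorname{tr}(Q(gP_1-I_g))=-1\).

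On the right side, consider first \(k<g-1\). Lemma \ref{lem_aux} (b) gives \(\partial P_k=P_{g-1}(\partial P_k)P_{g-1}\). Taking adjoints of Lemma \ref{lem_aux} (a) gives \(\overline{\partial}P_{g-1}=P_{g-1}\overline{\partial}P_{g-1}\), hence \(Q\,\overline{\partial}P_{g-1}=0\). With these, one checks that \(\operatorname{tr}(Q\,\overline{\partial}\partial P_k)=0\).

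For \(k=g-1\), write \(\operatorname{tr}(Q\,\partial\overline{\partial}P_{g-1})=\partial\operatorname{tr}(Q\,\overline{\partial}P_{g-1})+\operatorname{tr}(\partial P_{g-1}\wedge\overline{\partial}P_{g-1})\). The first term vanishes, and by Lemma \ref{lem_c1} (b) the second equals a nonzero constant times \(c_1(\det E_{g-1})\). The outcome is a pointwise identity
\[c_1(\det E_{g-1})=\lambda\,\mu\]
with an explicit constant \(\lambda\) depending on \(a_{g-1}\). Integrating determines \(\lambda=\deg E_{g-1}\), consistent with the first step.

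\textbf{Main obstacle.} To reach a contradiction I would show that \(c_1(\det E_{g-1})\) vanishes somewhere while \(\mu\) is strictly positive everywhere; positivity of \(\mu\) holds because \(\Omega^1_X\) is base-point-free. By Lemma \ref{lem_c1} (a), \(\partial P_{g-1}\) vanishes exactly where \((I_g-P_{g-1})\partial S_{g-1}=0\), that is, where \(E_{g-1}\) is stationary to first order. This corresponds to ramification of the dual (Gauss) map \(x\mapsto E_{g-1,x}\) into \(\mathbb{P}^{g-1\,\vee}\).

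The step I expect to be hardest is producing such a point in general. If that fails, the fallback is to also trace the identity against \(P_1\) and against the differences \(P_{k+1}-P_k\). The same cancellations then give pointwise identities \(c_1\)-forms \(=\) constants \(\cdot\,\mu\) for every \(k\). In particular \(c_1(E_1)\) would be proportional to \(\mu\), with \(c_1(E_1)\) the pullback of the Fubini--Study form under the canonical map. I would then compare this against the known expression \(c_1(E_1)=-\frac{i}{2\pi}\operatorname{tr}(\partial P_1\wedge\overline{\partial}P_1)\) and the definition of \(\mu\) via \(\|f\|^2\), aiming to force \(\partial P_1\equiv 0\). That would make \(P_1\) constant and contradict the linear independence of \(\omega_1,\dots,\omega_g\).
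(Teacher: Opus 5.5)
\textbf{There is a genuine gap: you never exhibit a point where the relevant Chern form vanishes.} Your reduction itself is sound. From the constancy of \(A\) you correctly obtain the pointwise identity \(2a_{g-1}\,c_1(\det E_{g-1})=\mu\). Tracing against each \(P_l\) (equivalently, against \(P_1\) and the \(P_{k+1}-P_k\)) and using Lemma~\ref{lem_aux} gives, in the same way, \(2a_l\,c_1(\det E_l)=(g-l)\mu\) for every \(l<g\).

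Your main route fixes \(l=g-1\), so it needs a point \(p\) with \(\partial P_{g-1}(p)=0\). That means the osculating hyperplane is stationary at \(p\), i.e.\ the last two Weierstrass gaps satisfy \(\ell_g(p)-\ell_{g-1}(p)\ge 2\). Nothing in your sketch or in the paper supplies such a point. The mere existence of Weierstrass points does not help: at some Weierstrass points the last two gaps are consecutive (e.g.\ gap sequence \(1,2,4,5\) in genus \(4\)). At such a point \(\partial P_{g-1}(p)\neq 0\), although a lower \(\partial P_l\) vanishes.

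Your fallback aims at an unreachable target. Integrating gives \(c_1(E_1)=-(2g-2)\mu\) pointwise, which says \(\partial P_1\) vanishes \emph{nowhere}; it cannot yield \(\partial P_1\equiv 0\). For non-hyperelliptic \(X\) the canonical map is an immersion, so \(\partial P_1\) really is nowhere zero. Hence no contradiction can come from zeros of \(\partial P_1\). You would need a separate rigidity input showing the Bergman metric never has constant curvature. The \(E_1\)-argument only works in the hyperelliptic case, where \(\partial P_1\) vanishes at the Weierstrass points.

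\textbf{The missing idea is to let the index depend on the point.} The section \(\mathcal{W}_g\) is a nonzero section of \((\Omega^1_X)^{\otimes g(g+1)/2}\), which has positive degree, so it vanishes at some \(p\). Since \(\mathcal{W}_1(p)\neq 0\), choose \(l\in\{1,\dots,g-1\}\) with \(\mathcal{W}_l(p)\neq 0=\mathcal{W}_{l+1}(p)\).

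Then \(S_l=(f,\dots,f^{(l-1)})\) is a frame of \(E_l\) near \(p\), and \(f^{(l)}(p)\in E_l(p)\). Hence \((I_g-P_l)(\partial S_l)(p)=0\), so \(\partial P_l(p)=0\) by Lemma~\ref{lem_c1}~(a), and therefore \(c_1(\det E_l)(p)=0\). This contradicts \(2a_l\,c_1(\det E_l)=(g-l)\mu\), because \(g-l>0\) and \(\mu(p)\neq 0\).

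With this choice your computation closes. It is exactly the paper's proof, which phrases the same step as evaluating \(\overline{\partial}\operatorname{tr}(P_l\,\partial V)=0\) at \(p\).
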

	\begin{proof}
		 Let us assume that the matrix \(V+\sum_{k=1}^g a_k P_k\) is constant. Then we obtain
		\[\operatorname{tr}(P_l \partial V)=-\sum_{k=1}^{g}a_k\operatorname{tr}(P_l\partial P_k)\]
		for every \(l\in\{1,\dots, g\}\).
		By Lemma \ref{lem_aux} (a), by the cyclicity of the trace, and by the commutativity of the projectors \(P_k\) and \(P_l\), we get
		\begin{align*}
			\operatorname{tr}(P_l\partial P_k)=\operatorname{tr}(P_l(\partial P_k)P_k)=\operatorname{tr}(P_lP_k(\partial P_k))=0.
		\end{align*}
		Thus, \(\operatorname{tr}(P_l\partial V)=0\) for all \(l\in\{1,\dots,g\}\).
				
		Since \(\det E_g=\det\mathcal{O}_X^{\oplus g}=\mathcal{O}_X\) is trivial, the section \(\mathcal{W}_g\) is a non-zero section of the ample line bundle \((\Omega_X^1)^{\otimes g(g+1)/2}\). Thus, we can choose a point \(p\in X\) such that \(\mathcal{W}_g(p)=0\). Since \(\mathcal{W}_1(p)=(\omega_1(p),\dots,\omega_g(p))^t\neq 0\), we can choose an \(l\in\{1,\dots,g-1\}\) such that
		\[\mathcal{W}_l(p)\neq 0,\qquad \mathcal{W}_{l+1}(p)=0.\]
		Locally at \(p\), the vectors \(f,f^{(1)},\dots,f^{(l-1)}\in E_l\) are linearly independent and hence \(S_l=(f,f^{(1)},\dots,f^{(l-1)})\) defines a local frame of \(E_l\). But in the fiber at \(p\), the vectors \(f(p),f^{(1)}(p),\dots,f^{(l)}(p)\in \mathbb{C}^g\) are linearly dependent since \(\mathcal{W}_{l+1}(p)=0\). Hence \(f^{(l)}(p)\in E_{l}(p)\) and \(\operatorname{Image}(\frac{dS_l}{dz}(p))\subseteq E_{l}(p)\), where \(E_{l}(p)=E_{l,p}\otimes_{\mathcal{O}_{X,p}}\mathbb{C}\) denotes the fiber of \(E_l\) at \(p\). We conclude that
		\[(I_g-P_l)(\partial S_l)(p)=0.\]
		Using Lemma \ref{lem_c1} (a), we conclude that
		\[(\partial P_l)(p)=((I_g-P_l)(\partial S_l) G_l^{-1}\overline{S}_l^t)(p)=0.\]
		Since \(P_l\) is Hermitian, we obtain
		\[(\overline{\partial}P_l)(p)=\overline{(\partial P_l)(p)}^t=0.\]
		
		We apply both results \(\operatorname{tr}(P_l\partial V)=0\) and \((\overline{\partial}P_l)(p)=0\) to obtain, at \(p\),
		\[0=\overline{\partial}\operatorname{tr}(P_l\partial V)=\operatorname{tr}(P_l\overline{\partial}\partial V)=-\pi i\operatorname{tr}(P_l\Delta V)\mu.\]
		Now Proposition \ref{pro_E-projectors} (c) implies \(0=-\pi i(g-l)\mu\) contradicting that \(l<g\) and \(\mu\) is a positive form. Thus, the matrix \(V+\sum_{k=1}^g a_k P_k\) cannot be constant and the lemma is proven.
	\end{proof}
	Now we give the proof of Lemma \ref{lem_lower-bound}.
	\begin{proof}[Proof of Lemma \ref{lem_lower-bound}]
		Since \(\mathcal{E}\) is positive semi-definite, Proposition \ref{pro_E-projectors} (b) implies that \(\deg E_k\le 0\) for all \(k\in\{1,\dots, g\}\). If \(\deg E_k=0\), Lemma \ref{lem_bilinear-form} and Proposition \ref{pro_E-projectors} (b) imply that the projector matrix \(P_k\) is constant on \(X\). By Proposition \ref{pro_E-projectors} (c), this would imply that
		\begin{align*}
			g-k=\int_X \operatorname{tr}(P_k(gP_1-I_g))\mu=\operatorname{tr}\left( P_k\int_X(gP_1-I_g)\mu\right)=0
		\end{align*}
		by Lemma \ref{lem_zkE} (b). Hence, this can only happen if \(g=k\). This shows the first statement of the lemma.
		
		By Lemmas \ref{lem_bilinear-form} and \ref{lem_not-constant}, we have
		\begin{align*}
			0&< \mathcal{E}\left( V+\sum_{k=1}^{g-1}\frac{g-k}{2\deg E_k}P_k,V+\sum_{k=1}^{g-1}\frac{g-k}{2\deg E_k}P_k\right)\\
			&=\mathcal{E}(V,V)+2\sum_{k=1}^{g-1}\frac{g-k}{2\deg E_k}\mathcal{E}(V,P_k)+\sum_{k,l=1}^{g-1}\frac{(g-k)(g-l)}{4\deg E_k\cdot \deg E_l}\mathcal{E}(P_k,P_l).
		\end{align*}
		Using Lemma \ref{lem_zkE} (c) and Proposition \ref{pro_E-projectors}, we deduce
		\[0< \frac{1}{2}\varphi(X)+\sum_{k=1}^{g-1}\frac{(g-k)^2}{2\deg E_k}\]
		and hence
		\[\varphi(X)>\sum_{k=1}^{g-1}\frac{(g-k)^2}{(-\deg E_k)}.\]
		This proves the lemma.
	\end{proof}
	
	We can now give the proof of the first statement of Theorem \ref{mainthm} in the case of general curves. The existence of the non-zero section \(\mathcal{W}_k\) shows that
	\[0\le \deg\left(\det E_k\otimes(\Omega_X^1)^{\otimes k(k+1)/2}\right)=\deg E_k+\tfrac{k(k+1)}{2}(2g-2).\]
	Thus, we get the upper bound
	\[-\deg E_k\le k(k+1)(g-1).\]
	If we apply it to the inequality in Lemma \ref{lem_lower-bound}, we obtain
	\begin{align*}
		\varphi(X)>\sum_{k=1}^{g-1}\frac{(g-k)^2}{k(k+1)(g-1)}.
	\end{align*}
	One checks directly that
	\[\frac{(g-k)^2}{k(k+1)}=1+\frac{g^2}{k}-\frac{(g+1)^2}{k+1}\]
	for all \(k\in\{1,\dots, g-1\}\) and hence
	\[\sum_{k=1}^{g-1}\frac{(g-k)^2}{k(k+1)}=g(g+2)-(2g+1)H_g\]
	after summing over \(1\le k\le g-1\), where \(H_g=\sum_{j=1}^g \frac{1}{j}\) denotes the \(g\)-th harmonic number. Thus, 
	\[\varphi(X)>\frac{g(g+2)-(2g+1)H_g}{g-1}\]
	as stated in the first statement of Theorem \ref{mainthm}. The lower bounds \(\frac{1}{2}\) for \(g=2\) and \(g-2\log g\) for \(g\ge 3\) can easily be checked. This proves the first statement of Theorem \ref{mainthm} in the case of general curves.
	
	\section{The main result for hyperelliptic curves}
	We prove the second statement of Theorem \ref{mainthm} in the case of hyperelliptic curves. 
	We first compute the values of \(\deg E_k\) exactly and then we will apply Lemma \ref{lem_lower-bound}.
	We use the notation from the previous sections.
	
	If we choose another basis of \(H^0(X,\Omega_X^1)\) to construct the osculating flag in Section \ref{sec_osculating-flag}, the resulting flag changes by a change of basis of \(\mathcal{O}_X^{\oplus g}\) and hence by a constant matrix in \(\operatorname{GL}_g(\mathbb{C})\). Thus, the numbers \(\deg E_k\) for \(k\in\{1,\dots,g\}\) are independent of the choice of the basis and the construction even allows us to take a non-orthonormal basis.
	
	We choose a Weierstraß equation
	\(y^2=h(x)\)
	of \(X\) such that the point at infinity is not a branch point, that is, \(\deg h=2g+2\). 
	Let \(\pi\colon X\to\mathbb{P}_{\mathbb{C}}^1\) be the hyperelliptic double cover induced by the affine map \((x,y)\mapsto x\).
	We write 
	\[U_0=\pi^{-1}\left(\{[x_0:x_1]~|~x_1\neq 0\}\right).\]
	The standard basis of \(H^0(X,\Omega_X^1)\) associated with this equation is given by
	\[\omega_1=\frac{dx}{y},~\omega_2=\frac{xdx}{y},~\dots,~\omega_g=\frac{x^{g-1}dx}{y}.\]
	We write \(U_0^{\circ}=U_0\cap\{y\neq 0\}\). On \(U_0^{\circ}\) the function \(x\) is a local coordinate and we can write \(\omega_j=f_jdx\) with \(f_j=\frac{x^{j-1}}{y}\) for \(j\in\{1,\dots,g\}\). 	We define the vector \(\rho=(1,x,\dots,x^{g-1})^t\). Then \(f=y^{-1}\rho\) on \(U_0^{\circ}\) and the Leibniz rule gives
	\[f^{(m)}=y^{-1}\rho^{(m)}+\sum_{j=0}^{m-1}a_{m,j}\rho^{(j)}\]
	for suitable functions \(a_{m,j}\) on \(U_0^{\circ}\) and \(m\in\{1,\dots,g-1\}\). Since \(y\) is invertible on \(U_0^\circ\), the corresponding triangular change of the generators \(f,f^{(1)},\dots,f^{(k-1)}\) of \(\mathcal{F}_k\) is invertible. Thus,
	\[\mathcal{F}_k|_{U_0^{\circ}}=\mathcal{O}_{U_0^{\circ}}\rho+\mathcal{O}_{U_0^{\circ}}\rho^{(1)}+\dots+\mathcal{O}_{U_0^{\circ}}\rho^{(k-1)}.\]
	A direct computation gives
	\[\rho^{(k)}=\left(0,\dots,0,k!,\frac{(k+1)!}{1!}x^1,\dots,\frac{(g-1)!}{(g-1-k)!}x^{g-1-k}\right)^t\]
	with \(k\) zeros at the beginning. Thus, the vectors \(\rho,\rho^{(1)},\dots,\rho^{(g-1)}\) are linearly independent everywhere on \(U_0\). Hence, for every \(k\in\{1,\dots,g\}\) we get a local frame \(S_k=\left(\rho,\rho^{(1)},\dots,\rho^{(k-1)}\right)\) of \(E_k\) over \(U_0^{\circ}\) which naturally extends to \(U_0\).
	
	We now consider the second standard affine chart of \(\mathbb{P}_{\mathbb{C}}^1\) and write
	\[U_\infty=\pi^{-1}\left(\{[x_0:x_1]~|~x_0\neq0\}\right).\]
	On \(U_0\cap U_\infty\), we set
	\[v=\frac{1}{x},\qquad w=\frac{y}{x^{g+1}}.\]
	Then \(w^2=v^{2g+2}h\left(\frac{1}{v}\right)\),
	so that \(v\) and \(w\) extend to regular functions on \(U_\infty\). Moreover,
	\[x=\frac{1}{v},\qquad dx=-\frac{dv}{v^2},\qquad y=\frac{w}{v^{g+1}},\]
	and therefore
	\[\omega_j=-\frac{v^{g-j}}{w}dv\]
	for every \(j\in\{1,\dots,g\}\). We define the vector \(\sigma=(v^{g-1},v^{g-2},\dots,1)^t\).
	With the same argument as above, we obtain the local frame
	\[T_k=\left(\sigma,\sigma^{(1)},\dots,\sigma^{(k-1)}\right)\]
	of \(E_k\) over \(U_\infty\), where the derivatives are taken with respect to \(v\).
	
	On \(U_0\cap U_\infty\), we have \(\sigma=v^{g-1}\rho\) and
	\(\frac{d}{dv}=-v^{-2}\frac{d}{dx}\).
	An induction on \(m\) gives
	\[\sigma^{(m)}=(-1)^mv^{g-1-2m}\rho^{(m)}+\sum_{j=0}^{m-1}b_{m,j}\rho^{(j)}\]
	for suitable regular functions \(b_{m,j}\) on \(U_0\cap U_\infty\) and \(m\in\{1,\dots,g-1\}\). Consequently, there is a triangular matrix \(A_k\) such that
	\(T_k=S_kA_k\),
	whose diagonal entries are
	\[v^{g-1},-v^{g-3},\dots,(-1)^{k-1}v^{g-2k+1}.\]
	Hence,
	\[\det A_k=(-1)^{\frac{k(k-1)}{2}}v^{k(g-k)}.\]
	Thus, the transition function of \(\det E_k\) differs by a nonzero constant from \(v^{k(g-k)}\). Since \(v^m\) is the transition function of \(\mathcal O_{\mathbb P^1}(-m)\) with respect to the standard affine charts, the above transition function yields
	\[\det E_k\simeq\pi^*\mathcal{O}_{\mathbb{P}^1}(-k(g-k)).\]
	Since \(\pi\) has degree \(2\), we conclude that
	\[\deg E_k=-2k(g-k).\]
	
	We can now apply this value to Lemma \ref{lem_lower-bound}. This yields
	\[\varphi(X)>\sum_{k=1}^{g-1}\frac{(g-k)^2}{2k(g-k)}=\sum_{k=1}^{g-1}\frac{g-k}{2k}=\frac{1}{2}(gH_{g-1}-(g-1))=\frac{g}{2}(H_g-1).\]
	The lower bound \(\frac{g}{2}(H_g-1)\ge \frac{g}{2}(\log g-\frac{1}{2})\) can be checked directly. This completes the proof of Theorem \ref{mainthm}. The proofs of the corollaries follow directly from the discussions around them.
	
\end{document}